\newcommand{\p}[1]{\left(#1\right)}
\let\conj=\induced
\def\x{\times}
\def\Soc{\textup{Soc}}
\def\Ker{\textup{Ker}}
\def\Soc{\textup{Soc}}
\def\Fix{\textup{Fix}}
\def\Z{\textup{Z}}
\newtheorem{teo}{Theorem}[section]
\newtheorem{prop}[teo]{Proposition}
\newtheorem{lemma}[teo]{Lemma}
\newtheorem{cor}[teo]{Corollary}
\newtheorem{athm}{Theorem}
\theoremstyle{definition}
\newtheorem{defi}[teo]{Definition}
\theoremstyle{remark}
\theoremstyle{definition}
\newtheorem{ex}[teo]{Example}
\title{On products of abelian skew braces}
\author{A. Ballester-Bolinches%
\thanks{Departament de Matem\`atiques, Universitat de Val\`encia, Dr.\ Moliner, 50, 46100 Burjassot, Val\`encia, Spain; \texttt{Adolfo.Ballester@uv.es}, \texttt{Ramon.Esteban@uv.es}, \texttt{Pedro.A.Perez@uv.es}; ORCID 0000-0002-2051-9075, 0000-0002-2321-8139,  0009-0009-7082-9002}
\and R. Esteban-Romero\addtocounter{footnote}{-1}\footnotemark \and P. P\'erez-Altarriba\addtocounter{footnote}{-1}\footnotemark}
\begin{document}
\date{}
\maketitle
\begin{abstract}
  The main objective of this paper is to study factorisations of skew left braces through abelian subbraces. We prove a skew brace theoretical analog of the classical Itô's theorem about product of two abelian groups: if $B = A_1A_2$ is a skew brace which is the product of two abelian skew subbraces $A_1$ and $A_2$, and $A_1$ is a left and right ideal of $B$, then the commutator ideal $[B, B]^B$ of $B$ is an abelian brace. If $A_1$ is a left (non-necessarily right) ideal of $B$, we show that there exists a strong left ideal of $B$ contained in $A_1$ or $A_2$. We also show factorisations of relevant ideals of factorised braces that are sums and products of abelian subbraces. 
  
  \emph{Keywords: factorised skew brace, abelian skw brace, trifactorised group}

  \emph{Mathematics Subject Classification (2020):
    16T25,  Yang-Baxter equations 
    81R50, 
    20C99, 
    20D40,  Products of subgroups of abstract finite groups}
\end{abstract}

\section{Introduction}

The Yang-Baxter equation is an important equation in pure mathematics and physics. One of the fundamental challenges related with this equation is the search of solutions. An important family of solutions can be constructed by means of linear extensions of maps $r\colon X\times X \rightarrow X\times X$, where $X$ is a basis of a vector space. They are called \emph{combinatorial} solutions. To construct and understand combinatorial solutions with some prescribed properties, left skew braces introduced in \cite{GuarnieriVendramin17, Rump07} have emerged as a remarkable algebraic structure. 

A \emph{left skew brace} $(B, {+}, {\cdot})$ consists of a set with two binary operations $+$ and~$\cdot$ such that $(B, {+})$ and $(B, {\cdot})$ are groups linked by a distributive-like law $a(b+c)=ab-a+ac$ for $a$, $b$, $c\in B$. When the operations on $B$ are clear from the context, we will refer to a left skew brace as $B$ instead of $(B, {+}, {\cdot})$. For simplicity, we will use \emph{skew brace} with the meaning of \emph{left skew brace}.

Factorised skew braces, or skew braces that are a sum or a product of some of their left (right) ideals are important for a better understanding of multipermutational combinatorial solutions of the Yang-Baxter equation. In this context, skew brace theoretical analogs of some important results on factorised groups play a significant role. 

The aim of this paper is to show a skew brace theoretical analog of Itô’s theorem on metabelian groups, and arises after a careful study of the results on factorised skew braces showed in \cite{JespersKubatVanAntwerpenVendramin19, Tsang_2024}.

In order to state our main result, we need to introduce some terminology. Let $(B,+,\cdot)$ be a skew brace. A \emph{subbrace} of $B$ is a subgroup of $(B, {+})$ which is also a subgroup of $(B, {\cdot})$. Both operations in $B$ can be related by the so-called {\it star product}: $a\ast b = -a + ab -b$, for all $a,b\in B$. Indeed, both group operations coincide if and only if, $a\ast b=0$ for all $a,b\in B$; in this case, $B$ is said to be a \emph{trivial skew brace}. Groups are precisely the trivial skew braces. We say that $B$ is \emph{abelian} if $B$ is a trivial skew brace and $(B, {+})$ is abelian, that is, $B$ is just an abelian group. 

The \emph{opposite} skew brace $B^{op}$ of $B$ is just the set $B$ with the same multiplicative group $(B, {\cdot})$ and the additive group $(B, +^{op})$ given by $a +^{op} b = b +a$ for all $a, b \in B$. 

Given two subsets $X,Y \subseteq B$, we write $\langle X \rangle_+,  [X,Y]_+$ for the subgroup generated by $X$ and the commutator of $X$ and $Y$ in $(B,+)$, respectively, and we write $\langle X\rangle_\bullet, [X,Y]_\bullet$ for the subgroup generated by $X$ and the commutator of $X$ and $Y$ in $(B,\cdot)$, respectively. The subgroup of $(B, +)$ generated by the set $\{x * y : x \in X, y \in Y\}$ is denoted by $X * Y$. 

A subbrace $L$ of $B$ is said to be a \emph{left ideal} (respectively \emph{right ideal}) of $B$ if $B \ast L \subseteq L$ (respectively $L \ast B \subseteq L$). A \emph{strong left ideal} $I$ of $B$ is a left ideal such that $(I,+)$ is a normal subgroup of $(B,+)$.  An \emph{ideal} of $B$ is a strong left ideal which is a right ideal of $B$. Note that every subgroup of a trivial skew brace is a left and right ideal, and $B*B$ is an ideal of $B$. 

The \emph{commutator} $[B,B]^B$ of $B$ is defined as the ideal of $B$ generated by the set
$\{[B,B]_+ \cup [B,B]_\bullet \cup \{ab - (a+b):\, a, b \in B\}$. This important ideal of $B$ was introduced in \cite{BournFacchiniPompili23} and plays a key role in the study of nilpotency and solubility of skew  braces (see \cite{BallesterEstebanFerraraPerezCTrombetti-arXiv-cent-nilp, BallesterEstebanJimenezPerezC24-solubleskewbraces}). The results of these papers show that the skew brace commutator is the true analog of the group commutator. In fact, $[B,B]^B$ is the smallest ideal of $B$ with quotient abelian, and $[B,B]^B = B*B + [B, B]_+$ (see \cite[Theorem ~3.3]{BallesterEstebanFerraraPerezCTrombetti-arXiv-cent-nilp}). Note that $B*B$ is a proper ideal of $[B,B]^B$ in general: if $B$ is a non-abelian trivial brace, then $0 = B*B \neq [B,B]^B = [B,B]_+$. 

The following analog of Itô's theorem for skew braces was proved in \cite{Tsang_2024}.

\begin{teo}[{\cite[Theorem 1.5]{Tsang_2024}}]\label{teo-ito-tsang}
Let $B$ be a skew brace and let be $A_1,A_2$ subbraces of $B$ satisfying the following conditions:
\begin{enumerate}
\item $B=A_1+A_2$ or $B=A_1A_2$.
\item $A_1$ and $A_2$ are trivial skew braces.
\item $A_1$ and $A_2$ are left and right ideals of $B^{op}$.
\end{enumerate}
Then $B*B$ is a trivial skew brace.
\end{teo}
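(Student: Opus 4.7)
The plan is to reduce the theorem to the inclusion $B * B \subseteq A_1 \cap A_2$, from which triviality of $B * B$ follows automatically: the intersection $A_1 \cap A_2$ is a subbrace of the trivial brace $A_1$ (equivalently, of $A_2$), hence is itself trivial, and every subbrace of a trivial brace is trivial. The strategy therefore is to show that every generator $b * b'$ of $B * B$ lies in $A_1 \cap A_2$.

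First, I would translate the ideal hypotheses element-wise using $a *^{op} b = -b + ab - a$ in $B^{op}$. The right ideal property of $A_i$ gives $a_i *^{op} b \in A_i$ for all $a_i \in A_i$ and $b \in B$, while the left ideal property of $A_j$ gives $b *^{op} a_j \in A_j$ for all $b \in B$ and $a_j \in A_j$. Specialising both to a cross pair with $\{i,j\} = \{1,2\}$ forces the key containment $a_i *^{op} a_j \in A_1 \cap A_2$, and symmetrically $a_j *^{op} a_i \in A_1 \cap A_2$. Equating the two expressions $a_i a_j = a_i + (a_i * a_j) + a_j = a_j + (a_i *^{op} a_j) + a_i$ yields
\[
a_i * a_j \,=\, -a_i + a_j + d + a_i - a_j,\qquad d := a_i *^{op} a_j \in A_1 \cap A_2,
\]
exhibiting the ordinary star product $a_i * a_j$ as a non-abelian additive conjugate of an element of $A_1 \cap A_2$. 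Combining this with the remaining ideal-theoretic data $b A_i \subseteq A_i + b$ and $A_i b \subseteq b + A_i$, and crucially with the coincidence $+\,=\,\cdot$ on each trivial subbrace $A_i$, I would deduce the additive normalisation of $A_1 \cap A_2$ by $A_1 \cup A_2$, and hence by $B = A_1 A_2$. This forces $a_i * a_j \in A_1 \cap A_2$; by symmetry $a_j * a_i \in A_1 \cap A_2$, and $a_i * a_i' = 0$ is automatic from triviality.

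With the basic cross star products placed inside $A_1 \cap A_2$, the second step is to expand a general generator $b * b'$ using the fundamental identities
\[
(xy) * z \,=\, x*(y*z) + (y*z) + (x*z), \qquad x*(y+z) \,=\, x*y + y + (x*z) - y,
\]
coming, respectively, from $\lambda \colon (B,\cdot) \to \Aut(B,+)$ being a group homomorphism and from each $\lambda_x$ being an additive automorphism. Writing $b = a_1 a_2$ and $b' = c_1 c_2$ and applying these identities repeatedly rewrites $b * b'$ as an additive word whose atoms are basic star products $a_i * c_j$ (already known to lie in $A_1 \cap A_2$), together with additive conjugations by elements of $B$ (which preserve $A_1 \cap A_2$ by the normalisation just established). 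Hence $b * b' \in A_1 \cap A_2$, as required. The additive case $B = A_1 + A_2$ is handled analogously, using that the inclusion $A_1 A_2 \subseteq A_1 + (A_1 \cap A_2) + A_2$ derived from the first step makes the two factorisations compatible.

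The main obstacle is the transfer between the multiplicative normalisation provided by the $B^{op}$-ideal hypotheses and the additive normalisation of $A_1 \cap A_2$ needed to absorb the conjugations appearing both in the formula for $a_i * a_j$ and in the iterative expansion of $(a_1 a_2) * (c_1 c_2)$. This is precisely where the triviality of $A_1$ and $A_2$ (not merely the commutativity of their additive groups) must be used in an essential way: one needs the identification $+\,=\,\cdot$ on each $A_i$ to convert the multiplicative conjugation statements $b A_i \subseteq A_i + b$ and $A_i b \subseteq b + A_i$ into additive ones on $A_1 \cap A_2$. Once this transfer is carried out, the remainder of the argument is a systematic bookkeeping exercise using the two displayed $*$-distributivity identities.
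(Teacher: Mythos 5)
The paper does not actually prove this statement (it is quoted from Tsang), so your argument has to stand on its own; unfortunately its opening reduction is the fatal flaw. The inclusion $B*B\subseteq A_1\cap A_2$ is false under the stated hypotheses. Take $B$ to be the almost trivial skew brace on $S_3$, i.e.\ $(B,\cdot)=S_3$ with $ab=b+a$, and set $A_1=\langle(1\,2\,3)\rangle$, $A_2=\langle(1\,2)\rangle$. Then $B^{op}$ is the trivial brace on $S_3$, so $A_1$ and $A_2$ are left and right ideals of $B^{op}$; both are trivial subbraces (they are abelian subgroups); and $B=A_1A_2=A_1+A_2$. Yet $a*b=-a+ab-b=-a+b+a-b$, so $B*B=[B,B]_+=A_3\neq 1=A_1\cap A_2$. (The theorem itself holds here: $A_3$ is abelian, so $B*B$ is trivial — but it is not contained in $A_1\cap A_2$.)

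The precise step where your computation goes wrong is the claim that $a_i*a_j=-a_i+a_j+d+a_i-a_j$, with $d=a_i*^{op}a_j\in A_1\cap A_2$, exhibits $a_i*a_j$ as an additive conjugate of $d$. It does not: the conjugate of $d$ by $g=-a_i+a_j$ is $g+d-g=-a_i+a_j+d-a_j+a_i$, whereas your expression ends in $+a_i-a_j$; the discrepancy is (a conjugate of) the additive commutator $[-a_i,a_j]_+$. In the example above $d=0$ and $a_i*a_j$ is exactly that nonzero commutator. Since this commutator term is precisely the quantity the theorem is meant to control, it cannot be absorbed by any normalisation of $A_1\cap A_2$, and the subsequent expansion of $(a_1a_2)*(c_1c_2)$ into atoms lying in $A_1\cap A_2$ cannot be repaired. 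Your two $*$-identities are correct, and the observation $a_i*^{op}a_j\in A_1\cap A_2$ is a true and relevant fact, but the conclusion has to be reached by controlling $[B,B]_+$ rather than avoiding it: the route suggested by this paper's own machinery is to pass to the large trifactorised group $(G,K,D,C)$, show that $G$ is a product of two abelian subgroups, and invoke the group-theoretic Itô theorem so that $G'$ (hence $[C,K]$, hence $B*B$) is abelian and centralised by the appropriate subgroup — compare the proofs of Theorem~3.5 and Theorem~A.
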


Itô’s theorem may be recovered from Theorem~\ref{teo-ito-tsang}, when $B^{op}$ is a trivial skew  brace.  Note that Theorem \ref{teo-ito-tsang} depends on some properties of the skew brace and the opposite skew brace. For practical and computation purposes it would be much better to impose conditions only on the skew brace, otherwise be would need to compute the opposite skew brace first.

The first main result of this paper is a skew brace theoretical analog of Itô's theorem under conditions that depend only on the skew brace. Furthermore, no extra conditions on the second factor are needed.

\begin{athm}\label{teo-ito-general}
Let $B$ be a skew brace and let $A_1,A_2$ be subbraces of $B$ satisfying the following conditions:
\begin{enumerate}
\item $A_1$ and $A_2$ are abelian skew braces.\label{teo-ito-general-cond-1}
\item $B=A_1+A_2$ or $B=A_1A_2$.\label{teo-ito-general-cond-2}
\item $A_1$ is a left and right ideal of $B$.\label{teo-ito-general-cond-3}
\end{enumerate}
Then $[B,B]^B$ is an abelian skew brace.
\end{athm}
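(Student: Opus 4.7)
The strategy is to reduce the two cases of condition~(\ref{teo-ito-general-cond-2}) to a single additive decomposition $B = A_1 + A_2$, and then to combine the classical metabelian property of $(B, +)$ with a careful analysis of the star product $B * B$.

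First I observe that the two branches of~(\ref{teo-ito-general-cond-2}) coincide in our setting: from the identity $a_1 a_2 = a_1 + (a_1 * a_2) + a_2$, valid in any skew brace, and the right ideal condition~(\ref{teo-ito-general-cond-3}) on $A_1$ (which gives $a_1 * a_2 \in A_1$), every product $a_1 a_2$ lies in $A_1 + A_2$. Hence $B = A_1 A_2$ implies $B = A_1 + A_2$, and in what follows I assume the additive factorisation. Next, since $(B, +) = (A_1, +) + (A_2, +)$ is the product of two abelian subgroups, classical It\^o's theorem for groups yields that $([B, B]_+, +)$ is abelian.

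Using the characterisation $[B, B]^B = B * B + [B, B]_+$ recalled in the introduction, it remains to control the star product $B * B$. My plan is to show $B * B \subseteq A_1$, via the identities $x * (y + z) = x*y + y + x*z - y$ and $(xy) * z = x * (y*z) + y*z + x*z$, combined with $A_1 * B \subseteq A_1$, $B * A_1 \subseteq A_1$ and $A_1 * A_1 = A_2 * A_2 = 0$. The pivotal tool is the relation $a_1 + a_2 = a_1 \cdot \lambda_{a_1^{-1}}(a_2)$, where $\lambda_{a_1^{-1}}(a_2) = (a_1^{-1} * a_2) + a_2$ with $a_1^{-1} * a_2 \in A_1$: this converts additive sums into multiplicative products at the cost of $A_1$-valued correction terms, which are absorbed by the additive abelianness of $A_1$. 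Iterating this procedure on $b * b'$ for $b, b' \in A_1 + A_2$ should then yield $B * B \subseteq A_1$, an abelian subbrace.

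Combining the two ingredients —  $B * B$ contained in the abelian subbrace $A_1$, and $([B, B]_+, +)$ abelian by It\^o — one concludes that $[B, B]^B$ is abelian as a skew brace, using the minimality of $[B, B]^B$ among ideals with abelian quotient together with the controlled structure of its generating pieces.

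The principal difficulty will be the rigorous proof that $B * B \subseteq A_1$: the recursion encoded in the expansion of $(x + y) * z$ must be shown to terminate within $A_1$, which requires careful use of the additive abelianness of both $A_1$ and $A_2$ at each step. A further subtlety is that $A_1$ need not be a strong left ideal of $B$ in general (as witnessed by the trivial skew brace $B = S_3$ with $A_1 = \gen{(12)}$ and $A_2 = \gen{(123)}$), so the final abelianness of $[B, B]^B$ cannot be extracted from a clean quotient argument $B/A_1$ but must instead be derived from the interplay between $A_1$, $[B, B]_+$, and the internal ideal structure of $[B, B]^B$.
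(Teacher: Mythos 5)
Your reduction to $B=A_1+A_2=A_1A_2$ and your intermediate claim $B*B\subseteq A_1$ are both sound and can be made rigorous: write $b=a_2a_1$ using the left-ideal property of $A_1$, expand $(a_2a_1)*z$ with the standard identities, and use $A_1*B\subseteq A_1$, $B*A_1\subseteq A_1$ and $A_2*A_2=0$; this parallels the step $[E,K]\leq L_1$ in the paper's Theorem~\ref{teo-3fact-ito-general}. The genuine gap is in your final paragraph. Abelianness of $[B,B]^B=B*B+[B,B]_+$ as a skew brace requires two things you have not established. First, that $(B*B,+)$ and $([B,B]_+,+)$ centralise each other additively: knowing that each is abelian does not make their sum abelian, and $A_1$ need not centralise $[B,B]_+$ (in your own example $B=S_3$, the subgroup $\gen{(12)}$ does not centralise $A_3$). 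Second, and more seriously, that the star product vanishes on all of $[B,B]^B$ and not merely on $B*B$: elements of $[B,B]_+$ need not lie in $A_1$, so $[B,B]_+ * [B,B]_+=0$ is not addressed at all, and the appeal to the minimality of $[B,B]^B$ among ideals with abelian quotient proves nothing about $[B,B]^B$ itself.

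The paper closes both holes by working in the large trifactorised group $(G,K,D,C)$: it writes $G=T_2T_1$ as a product of two abelian subgroups and applies It\^o's theorem to $G$ itself, not just to $(B,+)=K$, so that the whole of $G'\supseteq K'[C,K]$ is abelian — this simultaneously yields the additive commutativity of $B*B$ with $[B,B]_+$; and it kills the star product on $[B,B]^B$ via the identity $[B,B]^B=(B*B)[B,B]_\bullet$ (Proposition~\ref{prop-br-comm-car} combined with Lemma~\ref{lemma-prod-sum-ideals-is-B}) together with a three-subgroups-lemma computation giving $[\delta^{-1}(L_1),K']=1$ and $[C'(\delta^{-1}(L_1)),K'[C,K]]=1$. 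The first hole is repairable by elementary means (a three-subgroups argument inside $(B,+)$, using that $B*B$ is a normal subgroup of $(B,+)$ contained in the abelian $A_1$), but the second genuinely requires the multiplicative decomposition of $[B,B]^B$ and commutator calculations mixing the $\lambda$-action with the additive group; your outline does not contain these even in sketch form, so as it stands the proof is incomplete.
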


If $A_1$ is a strong left ideal and $A_2$ is a right ideal of $B$, Theorem~\ref{teo-ito-general} does not hold as the following example shows. Hence Condition~\ref{teo-ito-general-cond-3} is, in some sense, a minimal one. 

\begin{ex}
The skew left brace $B=S(24,811)$ has additive group $C_2\x C_2\x S_3$ and multiplicative group $C_2\x A_4$. $B$ contains a strong left ideal $A_1$ and a right ideal $A_2$, where $A_1$ and $A_2$ are abelian braces associated to the groups $C_3$ and $C_2\x C_2\x C_2$. It is clear that $B=A_1+A_2=A_1A_2$. We have that $B*B$ is not a trivial brace, and hence $[B,B]^B$ is not an abelian brace. 
\end{ex}

As in the theory of factorised groups, in the structural study of factorised skew braces seems interesting to know which relevant ideals inherit the factorisation. In the second part of the paper, we shall be concerned with this property, which we formalise with suitable definition. It is inspired by the concept of factorised subgroup of a group which is the product of two subgroups introduced by Wielandt 
(see\cite[Lemma~1.1.1]{AmbergFranciosiDeGiovanni92}).

\begin{defi}
A subbrace $S$ of a skew brace $B = A + C$ which is the sum of two subbraces $A$ and $C$ is called \emph{factorised} if $S = (A \cap  S) + (C \cap S)$ and $A \cap C$ is contained in $S$.
\end{defi}

Note that a weaker notion of factorised subbrace is presented in \cite[Definition~2.17]{JespersKubatVanAntwerpenVendramin19}.

Our next result shows that some important left ideals of skew braces which are the sum and product of trivial and abelian subbraces are factorised (see \cite{JespersKubatVanAntwerpenVendramin19} for related results). 

For a skew brace $B$, we can consider the set of fixed elements of $B$ by the action $\lambda$,
\[ \Fix(B)  = \{a \in B\,|\, \lambda_b(a) = a, \  \text{for every $b \in B$}\},\]
which turns out to be a left ideal, the \emph{socle} of $B$,
\begin{align*}
\Soc(B)  & = \{a \in B\, |\, \lambda_a(b) = b, \ a+b = b+a, \ \text{for every $b\in B$}\} \\
&  = \Ker \lambda \cap \Z(B,+),
\end{align*}
and the \emph{centre} of $B$:  $Z(B) = \Soc(B) \cap Z(B, \cdot)$

It is  well-known that $\Soc(B)$ and $Z(B)$ are ideals of $B$.

\begin{athm}\label{teo2}
Let the non-zero skew brace $B=A_1+A_2 = A_1A_2$ be the sum of two subbraces $A_1$ and $A_2$. 
\begin{enumerate}
\item If $A_1$ and $A_2$ are trivial skew braces, then $\Fix(B)$ and $\ker\lambda$ are factorised subbraces of $B$.

\item If $A_1$ and $A_2$ are abelian skew braces, then $\Soc(B)$ and $Z(B)$ are factorised ideals of $B$. 
\end{enumerate}
\end{athm}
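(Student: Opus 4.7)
The strategy is a uniform template for each of the four cases: first verify the inclusion $A_1\cap A_2\subseteq S$ for the subbrace $S$ in question, then verify the equality $S=(A_1\cap S)+(A_2\cap S)$ by decomposing a generic element of $S$ using whichever of the two given factorisations of $B$ is most convenient. The engine of every step is the elementary observation that if $A_i$ is a trivial subbrace, then $\lambda_a(a')=-a+aa'=-a+a+a'=a'$ for all $a,a'\in A_i$, so $\lambda_a$ fixes $A_i$ pointwise; for part~(2) I additionally use that $(A_i,+)$ is abelian.

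For part~(1), the inclusions $A_1\cap A_2\subseteq\ker\lambda\subseteq\Fix(B)$ are immediate: for $x\in A_1\cap A_2$ the additive automorphism $\lambda_x$ fixes $A_1$ and $A_2$ pointwise, hence fixes the whole of $B=A_1+A_2$. For the factorisation of $\Fix(B)$, I take $f=a_1+a_2\in\Fix(B)$ with $a_i\in A_i$; applying $\lambda_{a'_1}$ for an arbitrary $a'_1\in A_1$ and using $\lambda_{a'_1}(a_1)=a_1$ forces $\lambda_{a'_1}(a_2)=a_2$, and symmetrically $\lambda_{a'_2}(a_1)=a_1$ for every $a'_2\in A_2$. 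Now switching to the multiplicative decomposition $B=A_1A_2$: for any $b=a'_1a'_2$, multiplicativity $\lambda_b=\lambda_{a'_1}\lambda_{a'_2}$ combined with the previous identities yields $\lambda_b(a_i)=a_i$, so $a_i\in\Fix(B)$. For $\ker\lambda$, the product decomposition is more direct: writing $k=c_1c_2$ with $c_i\in A_i$, the relation $\lambda_{c_1}\lambda_{c_2}=\mathrm{id}$ combined with the trivial action of $\lambda_{c_j}$ on $A_j$ forces both $\lambda_{c_1}$ and $\lambda_{c_2}$ to fix $A_1\cup A_2$ pointwise, hence to act as the identity on $B=A_1+A_2$; then $k=c_1c_2=c_1+\lambda_{c_1}(c_2)=c_1+c_2$ is the required decomposition.

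For part~(2), I first observe that $Z(B,+)$ is factorised: for $z=a_1+a_2\in Z(B,+)$ and $b_1\in A_1$, cancelling $a_1$ in $z+b_1=b_1+z$ (permissible since $a_1$ and $b_1$ commute additively in $A_1$) yields $a_2+b_1=b_1+a_2$, and analogously $a_1+b_2=b_2+a_1$ for $b_2\in A_2$, whence $a_1,a_2\in Z(B,+)$. Combining with part~(1): for $s\in\Soc(B)=\ker\lambda\cap Z(B,+)$ I write $s=c_1c_2$; the $\ker\lambda$ argument of part~(1) gives $c_i\in\ker\lambda$ and $s=c_1+c_2$, and the $Z(B,+)$ argument applied to this sum gives $c_i\in Z(B,+)$, hence $c_i\in\Soc(B)$. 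For $Z(B)=\Soc(B)\cap Z(B,\cdot)$ the same $c_i$ additionally lie in $Z(B,\cdot)$: from $c_1c_2b_1=b_1c_1c_2$ and $c_1b_1=b_1c_1$ (triviality and commutativity of $A_1$) we cancel $c_1$ to obtain $c_2b_1=b_1c_2$, symmetrically $c_1b_2=b_2c_1$ for $b_2\in A_2$, and these extend to $B=A_1A_2$. The inclusions $A_1\cap A_2\subseteq\Soc(B),\,Z(B)$ follow by the same cancellations.

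The main obstacle I anticipate is the $\Fix(B)$ factorisation, the only step that genuinely toggles between both decompositions of $B$: the additive one to extract the partial fixed-point identities $\lambda_{a'_1}(a_2)=a_2$ and $\lambda_{a'_2}(a_1)=a_1$ from the decomposition of $f$, and the multiplicative one to globalise them to every $b\in B$ via multiplicativity of $\lambda$. All remaining factorisations then reduce to this pattern combined with straightforward left or right cancellations inside the abelian or trivial groups $A_i$.
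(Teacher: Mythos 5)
Your argument is, in all substantive steps, the same as the paper's: decompose an element of the left ideal in question using whichever factorisation of $B$ is convenient, use triviality of the $A_i$ (and additive/multiplicative commutativity in part 2) to cancel one component, and globalise via $\lambda_{a_1a_2}=\lambda_{a_1}\lambda_{a_2}$; the treatment of $\Soc(B)$ and $Z(B)$ by bootstrapping from the $\ker\lambda$ and $Z(B,+)$ cases matches the paper's proof as well.

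The one genuine flaw is the claimed chain $A_1\cap A_2\subseteq\ker\lambda\subseteq\Fix(B)$. The second inclusion is false in general: $\ker\lambda$ consists of the $a$ with $\lambda_a=\mathrm{id}$, whereas $\Fix(B)$ consists of the $a$ fixed by every $\lambda_b$, and neither contains the other. Your justification (``$\lambda_x$ fixes $A_1$ and $A_2$ pointwise, hence fixes $B=A_1+A_2$'') proves only $x\in\ker\lambda$. Fortunately the conclusion you actually need, $A_1\cap A_2\subseteq\Fix(B)$, is true and follows from the same ingredients by the dual computation: for $x\in A_1\cap A_2$ and $b=b_1b_2$ one has $\lambda_b(x)=\lambda_{b_1}\bigl(\lambda_{b_2}(x)\bigr)=\lambda_{b_1}(x)=x$, using that $x$ lies in both trivial subbraces. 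This is exactly the verification the paper gives. With that one sentence replaced, your proof is complete and coincides with the paper's.
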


The centre of a group which is the product of two subgroups is not factorised in general (see\cite[Example~7.1.5]{AmbergFranciosiDeGiovanni92}). Hence the hypothesis on $A_1$ and $A_2$ in Statement~2 of Theorem~\ref{teo2} is necessary.

Strong left ideals provide a framework in which the results of factorisations of skew braces can be studied (see \cite{JespersKubatVanAntwerpenVendramin19}) and are closely related with decomposability of solutions of the Yang-Baxter equation.  Our third main result shows the existence of strong left ideals of a factorised skew brace $B$ which is the sum of two abelian subbraces. 

\begin{athm}\label{teo3}
Let the finite non-zero skew brace $B=A_1+A_2$ be the sum of two abelian skew subbraces $A_1$ and $A_2$. If $A_1$ is a left ideal of $B$, there exists a non-zero strong left ideal of $B$ contained in $A_1$ or $A_2$.
\end{athm}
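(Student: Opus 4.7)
The plan is a case split driven by the size of $\Soc(B)$, using the factorisation results of Theorem~\ref{teo2}.

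A preliminary observation is that $A_1\cap A_2$ is itself a strong left ideal of $B$ contained in $\Soc(B)$. Indeed, Theorem~\ref{teo2}(1), applied to the trivial subbraces $A_1,A_2$, gives $A_1\cap A_2\subseteq\Fix(B)$, so for every $y\in A_1\cap A_2$ and every $b\in B$ we have $\lambda_b(y)=y$ and thus $b*y=\lambda_b(y)-y=0$. Since $y\in A_i$ and $A_i$ is abelian, $\lambda_y$ acts as the identity on each $A_i$ and hence, by additivity, on $B=A_1+A_2$; so $y\in\Ker\lambda$. Writing an arbitrary $b\in B$ as $b=a_1+a_2$ and using the abelianness of both $A_i$ yields $-b+y+b=-a_2+(-a_1+y+a_1)+a_2=-a_2+y+a_2=y$, so $y\in\Z(B,+)$. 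Thus $A_1\cap A_2\subseteq\Soc(B)$, is additively central, and, since $b*y=0$, is a left ideal---a strong left ideal inside $A_1$. If it is non-zero, we are done.

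Assume next that $A_1\cap A_2=0$ and apply Theorem~\ref{teo2}(2): $\Soc(B)=(\Soc(B)\cap A_1)+(\Soc(B)\cap A_2)$. If $\Soc(B)\cap A_1\neq 0$, then it is additively central and therefore normal, and because $\Soc(B)$ is an ideal and $A_1$ is a left ideal, $b*x=\lambda_b(x)-x\in\Soc(B)\cap A_1$ for every $x\in\Soc(B)\cap A_1$; so it is a strong left ideal inside $A_1$. Otherwise $\Soc(B)\subseteq A_2$, and $\Soc(B)$, being an ideal, is itself a strong left ideal contained in $A_2$; if $\Soc(B)\neq 0$, we are again done.

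The remaining case $\Soc(B)=0$ is the core difficulty. Here I would examine $\Z(B,+)$. If $z=a_1+a_2\in\Z(B,+)$ with $a_i\in A_i$, the centrality condition $-a_2'+z+a_2'=z$ combined with the abelianness of $A_2$ gives $-a_2'+a_1+a_2'=a_1$ for every $a_2'\in A_2$; since $a_1$ also commutes additively with $A_1$, this forces $a_1\in\Z(B,+)\cap A_1$. So if $\Z(B,+)\cap A_1\neq 0$ we obtain a strong left ideal inside $A_1$ (the intersection is $\lambda$-invariant because both $\Z(B,+)$ and $A_1$ are), while if $\Z(B,+)\cap A_1=0$ the argument forces $\Z(B,+)\subseteq A_2$, and then $\Z(B,+)$, being characteristic in $(B,+)$, is $\lambda$-invariant and gives a strong left ideal in $A_2$ whenever it is non-zero. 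The genuinely hard sub-case is therefore $\Soc(B)=\Z(B,+)=0$: the additive group $(B,+)$ is a finite metabelian group with trivial centre, and one should show that $A_1$ itself is additively normal in $B$---equivalently, that $[A_1,A_2]_+\subseteq A_1$---so that $A_1$ becomes the desired strong left ideal. I expect this last step to require induction on $\lvert B\rvert$ coupled with a careful use of the identity $\lambda_{a_2}(a_1)=-a_2+a_2\cdot a_1\in A_1$ coming from $A_1$ being a left ideal, and this is where I see the main obstacle of the proof.
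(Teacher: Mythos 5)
Your preliminary reductions are correct as far as they go: the observations that $A_1\cap A_2$ is a central, $\lambda$-fixed strong left ideal, that $\Soc(B)\cap A_1$ (or, failing that, $\Soc(B)\subseteq A_2$) works when $\Soc(B)\neq 0$, and that $\Z(B,+)\cap A_1$ or $\Z(B,+)\subseteq A_2$ works when $\Z(B,+)\neq 0$, are all sound (modulo the small point, which you should make explicit, that $B=A_1+A_2$ together with Lemma~\ref{lemma-prod-sum-ideals-is-B} gives $B=A_1A_2$, so Theorem~\ref{teo2} applies). But the argument then stops exactly where the theorem begins: the case $\Soc(B)=\Z(B,+)=0$ is not an edge case but the generic and essential one, and you acknowledge you cannot handle it. Worse, the target you propose for that case --- showing that $A_1$ itself is additively normal in $(B,+)$ --- is false. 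Take $B$ to be the trivial skew brace on $S_3$ with $A_1=C_2$ and $A_2=C_3$: every subgroup of a trivial brace is a left ideal, $\Soc(B)=\Z(B,+)=0$, and $A_1$ is not normal in $(B,+)$; the theorem is witnessed by $A_2$, not by normality of $A_1$. So the conclusion in the hard case may be forced to land in $A_2$ even though only $A_1$ carries the left-ideal hypothesis, and no induction aimed at normalising $A_1$ can succeed.

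The paper's proof takes an entirely different route that you would not recover from these reductions. It passes to the large trifactorised group $(G,K,D,C)$, where the hypotheses become $K=L_1L_2$, $C=E_1E_2$ with $T_i=L_iD\cap L_iE$ abelian and $[C,L_1]\leq L_1$; Itô's theorem for groups then makes $G'$ abelian, the subgroup $X=G'T_1\cap G'T_2$ is shown to be nilpotent via \cite[Corollary 1.3.5]{AmbergFranciosiDeGiovanni92}, and a nontrivial element of $K\cap \Z(X)$ combined with the factorisation $C_K(X)=(C_K(X)\cap L_1)(C_K(X)\cap L_2)$ (Lemma~\ref{lemma-3fact-fact-centraliser}) produces the required normal subgroup of $G$ inside $L_1$ or $L_2$; a final contradiction argument with $C_G(T_1\cap C_G(G'))$ closes the case analysis. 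The decisive ingredients --- Itô's theorem applied to $G=T_1T_2$ and the nilpotency of $X$ --- have no counterpart in your proposal, so the gap is not a missing technical step but the absence of the main idea.
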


Our approach is based on the intimate relationship between skew braces and trifactorised groups showed in \cite{BallesterEsteban22,ballesterbolinches2025categoriesskewleftbraces}. Results of these papers show that this is a natural setting in which skew brace factorisations can be studied (see \cite[Section~6]{BallesterEsteban22}).

\section{Preliminaries}\label{sec-traduct}

Given a skew brace $(B,+,\cdot)$, the action of the multiplicative group $C=(B, \cdot)$ on the additive group $K=(B, +)$, called the \emph{lambda action}, is defined by $\lambda(a)=\lambda_a$ for every $a\in B$, where $\lambda_a(b)=-a+ab$ for all $a, b\in B$. The identity map $\delta\colon C\longrightarrow K$ is a bijective derivation with respect to the lambda action. Let $G=[K]C$ be the semidirect product of $K$ and $C$ with respect to the lambda action.

As usual, we identify $K$ with the normal subgroup $\{(k, 1)\;|\; k\in K\}$ and $C$ with the subgroup $\{(0, c)\;|\; c\in C\}$. Then $G$ can be viewed as a split extension $G = KC$ and $K \cap C = 1$, and where the lambda action of $C$ on $K$ is just the conjugation action within $G$. Furthermore, $G$ possesses a triple factorisation $G=KC=KD=CD$, where $D=\{\delta(c)c\;|\; c\in C\}$ is a subgroup of $G$, and $K\cap C=K\cap D=C\cap D=1$. $(G,K,D,C)$ is called the \emph{large trifactorised group} associated to $B$.  The passage from $C$ to $K$ and vice versa will be done by means of the derivation $\delta\colon C\longrightarrow K$.

 We compute the conjugation $\conj{b}a = bab^{-1}$ and the commutator $[a, b] = aba^{-1}b^{-1}$, for $a, b \in G$, in the large trifactorised group $G$ associated to the skew brace $B$.

The following presents some basic facts about the relationships between the subbrace and ideal structure of a skew brace and the subgroup structure of its associated large trifactorised group.

\begin{prop}[{\cite[Proposition 5.2(1)]{ballesterbolinches2025categoriesskewleftbraces}}]
A subset $L$ of $K$ is a subbrace of $B$ if and only if $LD\cap LC$ is a subgroup of $G$.
\end{prop}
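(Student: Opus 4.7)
The plan is to argue entirely inside the trifactorised group $G=KC$, identifying a subset $L\subseteq K$ with its multiplicative counterpart $L_C:=\delta^{-1}(L)\subseteq C$, which coincides with $L$ as a subset of $B$ because $\delta$ is the identity map. The key preliminary step is to put $LD\cap LC$ into normal form with respect to the unique factorisation $G=KC$. A generic element of $LD$ has the shape $\ell\cdot\delta(c)c$ with $\ell\in L$ and $c\in C$; since $\ell$ and $\delta(c)$ both lie in the additive group $K$, this equals $(\ell+\delta(c))\cdot c$ in $KC$-form. Such an element belongs to $LC$ exactly when $\ell+\delta(c)\in L$, so
\[
LD\cap LC = \bigl\{\, m\cdot c : m\in L,\ c\in C,\ m-\delta(c)\in L\,\bigr\}.
\]

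For the forward implication, I would assume $L$ is a subbrace. Additive closure turns the double condition $m,\,m-\delta(c)\in L$ into the pair $m\in L$ and $\delta(c)\in L$, that is, $c\in L_C$; hence $LD\cap LC$ equals the set product $L\cdot L_C$ inside $G$. To see that this product is a subgroup, I would check that $L_C$ normalises $L$: for $c\in L_C$ and $\ell\in L$, the conjugation in $G$ computes to $\conj{c}\ell=\lambda_c(\ell)=-c+c\ell$, which lies in $L$ because $L$ is closed under both operations of the brace. Combined with the fact that $L_C$ is a subgroup of $C$, this normalisation gives closure of $L\cdot L_C$ under products and inverses by the usual semidirect-product argument.

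For the converse, I would assume that $H:=LD\cap LC$ is a subgroup of $G$ and extract the two closure properties of $L$ by intersecting $H$ with the complements $K$ and $C$. An element $m\cdot c\in H$ lies in $K$ iff its $C$-part is trivial, i.e. $c=0$; then the only remaining constraint is $m\in L$, so $H\cap K=L$. Since $H\cap K$ is a subgroup of $K$, this shows $L$ is closed under $+$. With additive closure in hand, the description above simplifies to $H=L\cdot L_C$, and an element lies in $C$ iff $m=0$, which forces $\delta(c)\in L$, i.e. $c\in L_C$; thus $H\cap C=L_C$ is a subgroup of $C$, so $L$ is closed under $\cdot$ as well, and $L$ is a subbrace.

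The only real obstacle is the careful bookkeeping of the $KC$ normal form and the identification via $\delta$ between the additive and multiplicative copies of $L$ in $B$; once the explicit description of $LD\cap LC$ is written down, both implications reduce to short routine checks.
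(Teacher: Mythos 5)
Your argument is correct. The paper itself does not prove this proposition (it is quoted from \cite[Proposition 5.2(1)]{ballesterbolinches2025categoriesskewleftbraces}), but your normal-form computation $LD\cap LC=\{m\cdot c: m\in L,\ m-\delta(c)\in L\}$ is right, and both directions go through: in the forward direction $LD\cap LC=L\cdot\delta^{-1}(L)$ with $\delta^{-1}(L)$ a subgroup of $C$ normalising $L$ (since $\lambda_c(\ell)=-c+c\ell\in L$ for $c,\ell\in L$), and in the converse direction intersecting the subgroup $LD\cap LC$ with $K$ and with $C$ recovers exactly the additive and multiplicative subgroup conditions on $L$. This is the natural argument and matches the framework the paper uses throughout (e.g.\ the identification $\delta^{-1}(L)=LD\cap C$).
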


The large trifactorised group associated with a subbrace of $B$ is given in the following.

\begin{prop}[{\cite[Proposition 5.4]{ballesterbolinches2025categoriesskewleftbraces}}]\label{prop-3factAss-SubBr}
Let $L\subseteq K$ be a subbrace of $B$. Then $T=L\sigma^{-1}(L) = LD\cap LC$ is a subgroup of $G$ and 
$$(LD\cap LC,L, LC\cap D,LD\cap C)$$
is the large trifactorised group associated with $L$.
\end{prop}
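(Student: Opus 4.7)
The plan is to work explicitly in coordinates inside $G=[K]C$, writing elements as pairs $(k,c)$ with $k\in K$, $c\in C$, and multiplication $(k_1,c_1)(k_2,c_2)=(k_1+\lambda_{c_1}(k_2),\,c_1 c_2)$; in particular each element of $D$ has the form $(c,c)$ for some $c$ in the underlying set of $B$. That $T:=LD\cap LC$ is a subgroup of $G$ is immediate from the preceding proposition, since $L$ is a subbrace of $B$, so the whole game is to identify $T$ together with the three distinguished subgroups it should contain.

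First I would unwind the intersection $LD\cap LC$: a generic element of $LD$ has the form $(l,1)(c,c)=(l+c,c)$ with $l\in L$ and $c\in C$, and this lies in $LC$ exactly when $l+c\in L$, that is, when $c\in L$ (viewed as an element of the underlying set of $B$). Since $L$ is a subbrace, the set $\sigma^{-1}(L)=\{(0,c)\in C:c\in L\}$ is precisely the multiplicative subgroup of $C$ corresponding to $L$. Hence $T=\{(l',c):l'\in L,\; c\in L\}=L\cdot\sigma^{-1}(L)$, which gives the claimed equality $T=L\sigma^{-1}(L)=LD\cap LC$.

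Next I would identify the three distinguished subgroups sitting inside $T$. Intersecting $LD$ with $C$ forces $l+c=0$, whence $c=-l\in L$, so $LD\cap C=\sigma^{-1}(L)$ is the multiplicative copy of $L$ inside $C$. Intersecting $LC$ with $D$, an element $(c,c)$ lies in $LC$ iff $c\in L$, so $LC\cap D=\{(c,c):c\in L\}$ is the graph of the restriction of the derivation $\delta$ to $L$; this restriction is a bijective derivation for the lambda action of $(L,\cdot)$ on $(L,+)$, which is well-defined because $L$ is a subbrace. Together with $L$ itself as the additive part, these three subgroups are by construction the ones that constitute the large trifactorised group associated with $L$.

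Finally I would check the three defining properties. The pairwise intersections are trivial because they are already trivial in the ambient trifactorisation $(G,K,D,C)$, where $K\cap C=K\cap D=C\cap D=1$. Normality of $L$ in $T$ is inherited from the normality of $K$ in $G$ via $L=T\cap K$. The three factorisations $T=L(LD\cap C)=L(LC\cap D)=(LC\cap D)(LD\cap C)$ then follow by direct computation, expressing a generic $(l',c)\in T$ as a product with factors in the appropriate pair of subgroups. The main obstacle is really just the bookkeeping in the middle step: once one recognises that intersecting with $C$ recovers the multiplicative copy of $L$ while intersecting with $D$ recovers the derivation copy, all trifactorisation properties transfer from $(G,K,D,C)$ to $(T,L,LC\cap D,LD\cap C)$ essentially for free.
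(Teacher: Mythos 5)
Your proposal is correct. Note that the paper does not prove this proposition at all --- it is imported verbatim from \cite[Proposition~5.4]{ballesterbolinches2025categoriesskewleftbraces} --- so there is no in-paper argument to compare against; your explicit coordinate computation in $G=[K]C$ (identifying $T=\{(l,c): l,c\in L\}$, recognising $LD\cap C$ as the multiplicative copy $\delta^{-1}(L)$ and $LC\cap D$ as the graph of $\delta|_L$, and transferring the factorisations and trivial intersections from $(G,K,D,C)$) is a sound and complete direct verification of the cited statement.
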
 

Let $A$, $A_1$ and $A_2$ be subbraces of $B$. We identify them as the subgroups $L$, $L_1$ and $L_2$ of $K$ in $G$ respectively. If $a \in A_1$ and $b \in A_2$, then $a*b$ corresponds to the element $[\delta^{-1}(a), b]$ of $G$ which belongs to $K$. Therefore the additive group of $A_1*A_2$ can be identified  as the subgroup $[\delta^{-1}(L_1),L_2]$ of $G$. Note that $[\delta^{-1}(L_1),L_2]$ is contained in $K$. By \cite[Lemma 5.3]{ballesterbolinches2025categoriesskewleftbraces}, $\delta^{-1}(L_1)=L_1D\cap C$. Therefore we have

\begin{prop}\label{prop-add-gr-deriv}
The subgroup $A_1*A_2$ of $K$ corresponds to the subgroup $[L_1D\cap C,L_2]$ of $G$. In particular, the additive group of $B*B$ corresponds to the normal subgroup $[C,K]$ of $G$.
\end{prop}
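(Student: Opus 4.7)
The plan is to deduce the proposition almost directly from the paragraph immediately preceding the statement. I would recall from there that for $a\in A_1$ and $b\in A_2$, the star product $a*b$ corresponds in $G$ to the commutator $[\delta^{-1}(a),b]$, and that the additive subgroup $A_1*A_2$ of $K$ is, by definition, generated by all such elements as $a$ and $b$ range over $A_1$ and $A_2$. Hence, under the identification of $B$ with $K$, the additive group $A_1*A_2$ is identified with $[\delta^{-1}(L_1),L_2]$. Invoking \cite[Lemma~5.3]{ballesterbolinches2025categoriesskewleftbraces} to rewrite $\delta^{-1}(L_1)=L_1D\cap C$ then gives the first assertion.

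For the ``in particular'' statement, I would specialise to $A_1=A_2=B$, so that $L_1=L_2=K$. Since the derivation $\delta\colon C\to K$ is a bijection, $\delta^{-1}(K)=C$, and the first part immediately yields that the additive group of $B*B$ corresponds to $[C,K]$ in $G$.

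It remains to establish that $[C,K]$ is a normal subgroup of $G=KC$. Since $[C,K]\subseteq K$ and is plainly normalised by $C$, it suffices to check invariance under conjugation by $K$. Under the identification $K=(B,+)$, this amounts exactly to saying that $B*B$ is a normal subgroup of $(B,+)$, which is part of the fact (recalled in the introduction) that $B*B$ is an ideal of $B$. This is the only step that requires even mild care; the rest of the argument is a translation between the skew-brace and trifactorised-group viewpoints, and no substantive obstacle arises.
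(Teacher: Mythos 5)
Your proposal is correct and follows essentially the same route as the paper, which proves this proposition in the paragraph immediately preceding it: identify $a*b$ with $[\delta^{-1}(a),b]$, pass to generated subgroups, and rewrite $\delta^{-1}(L_1)=L_1D\cap C$ via the cited lemma, then specialise to $A_1=A_2=B$. The only point you elaborate that the paper leaves implicit is the normality of $[C,K]$ in $G$; your argument via $B*B$ being an ideal works, though it is also immediate from the standard group-theoretic fact that $[C,K]$ is normal in $\langle C,K\rangle=G$.
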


Next we show how to compute some important subbraces of $B$ in the large trifactorised group associated to $B$. 
\begin{prop}\label{prop-trad-prop-br}
\begin{enumerate}
\item $A$ is a trivial skew brace if and only if, $[LD\cap C,L]=1$.
\item $A$ is a left ideal if and only if, $[C,L]\leq L$.
\item $A$ is a right ideal if and only if, $[LD\cap C,K]\leq L$.
\item $A$ is a strong left ideal if and only if, $L$ is normal in $G$.
\end{enumerate}
\end{prop}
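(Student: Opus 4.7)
The plan is to translate each brace-theoretic condition into a commutator condition in the large trifactorised group $G$, using Proposition~\ref{prop-add-gr-deriv} as the dictionary between the star product in $B$ and commutators in $G$.

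For part~(1), I will use that $A$ is a trivial skew brace if and only if $A * A = 0$ (as a subgroup of $(B,+)$). Applying Proposition~\ref{prop-add-gr-deriv} with $L_1 = L_2 = L$ identifies the additive subgroup $A * A$ with $[LD \cap C, L]$, and triviality of the former amounts to $[LD \cap C, L] = 1$.

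Parts~(2) and~(3) follow by the same translation. The condition that $A$ is a left ideal is $B * A \subseteq A$; applying Proposition~\ref{prop-add-gr-deriv} with $L_1 = K$, and using that $KD = G$ so that $KD \cap C = C$, this becomes $[C, L] \leq L$. Symmetrically, $A$ is a right ideal iff $A * B \subseteq A$, which translates (with $L_2 = K$) into $[LD \cap C, K] \leq L$.

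Part~(4) is the only one requiring an ingredient beyond Proposition~\ref{prop-add-gr-deriv}. A strong left ideal is a left ideal whose additive subgroup is normal in $(B,+)$; under the identification with $L \leq K$, additive normality is exactly the statement that $L$ is normalised by $K$, while by~(2) the left ideal condition is equivalent to $L$ being normalised by $C$. Since $G = KC$, these two properties together are equivalent to $L$ being normal in $G$, and conversely normality of $L$ in $G$ restricts to both. The only (mild) point to verify is that normalisation by $K$ and by $C$ separately implies normalisation by the whole product $KC = G$, which is immediate from the factorisation, so there is no real obstacle; the heart of the proposition is really the commutator translation of the star product provided by Proposition~\ref{prop-add-gr-deriv}.
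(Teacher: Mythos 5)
Your proof is correct and follows essentially the same route as the paper: each condition is translated through the star-product/commutator dictionary of Proposition~\ref{prop-add-gr-deriv} (together with $KD=G$ for part~(2)). The only difference is that for parts~(2) and~(4) the paper simply cites \cite{ballesterbolinches2025categoriesskewleftbraces} while you derive them directly from the dictionary and the factorisation $G=KC$; those derivations are sound.
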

\begin{proof}
\begin{enumerate}
\item By Proposition~\ref{prop-3factAss-SubBr}, we have that $(T,L,T\cap D,T\cap C)$ with $T=LD\cap LC$ is the large trifactorised group associated to $L$. Therefore, $A$ is a trivial skew brace if and only if, $T\cap C$ acts trivially on $L$, that is, $[LD\cap C,L] = 1$.
\item By {\cite[Proposition 5.2.2]{ballesterbolinches2025categoriesskewleftbraces}}, $A$ is a left ideal of $B$ if and only if, $C$ normalises $L$, that is, $[C,L]\leq L$.

\item $A$ is a right ideal if and only if,  $A*B\subseteq A$. By Proposition \ref{prop-add-gr-deriv}, the latter statement is equivalent to $[LD\cap C,K]\leq L$.

\item  This is exactly {\cite[Proposition 5.2.3]{ballesterbolinches2025categoriesskewleftbraces}}.\qedhere
\end{enumerate}
\end{proof}

We close this section with two applications of the above propositions.

\begin{cor}\label{prop-ab-br-eq}
The following statements are equivalent:
\begin{enumerate}
\item $B$ is an abelian skew brace.\label{prop-ab-br-eq-cond-1}
\item $G$ is abelian.\label{prop-ab-br-eq-cond-2}
\item $K$ is abelian and $[C,K]=1$.\label{prop-ab-br-eq-cond-3}
\end{enumerate}
\end{cor}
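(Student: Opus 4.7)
The plan is to organise the argument as the cycle \emph{(1) $\Leftrightarrow$ (3)} and \emph{(2) $\Leftrightarrow$ (3)}, using Proposition~\ref{prop-trad-prop-br} to translate the brace-theoretic conditions into group-theoretic conditions inside~$G$.

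For \emph{(1) $\Leftrightarrow$ (3)}, I would apply Proposition~\ref{prop-trad-prop-br}(1) to the subbrace $A=B$, whose associated subgroup of $K$ is $L=K$ itself. The large trifactorisation gives $G=KD$, hence $KD\cap C=C$, so the statement that $B$ is a trivial skew brace translates exactly to $[C,K]=1$. Since $B$ is an abelian skew brace if and only if $B$ is a trivial skew brace and the additive group $K=(B,+)$ is abelian, the equivalence with \emph{(3)} is immediate.

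The implication \emph{(2) $\Rightarrow$ (3)} is straightforward: any subgroup of an abelian group is abelian, so $K$ is abelian, and $[C,K]$ is contained in the trivial derived subgroup $[G,G]=1$.

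For \emph{(3) $\Rightarrow$ (2)}, the key point is that $[C,K]=1$ means the lambda action of $C$ on $K$ is trivial, i.e.\ $\lambda_c(b)=b$ for all $b\in B$ and $c\in C$. Unpacking the definition $\lambda_c(b)=-c+cb$, this gives $cb=c+b$ for every $c,b\in B$, so the two operations of $B$ coincide. In particular, as groups $(B,\cdot)=(B,+)$, so $C$ is isomorphic to $K$ and hence abelian. Since $G=KC$ with $K$ and $C$ abelian subgroups that commute elementwise ($[C,K]=1$), $G$ is abelian. The only possibly non-obvious step is this last one, namely deducing commutativity of~$C$ from~\emph{(3)}; the rest is a direct application of the dictionary built in the preceding propositions.
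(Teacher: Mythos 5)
Your proposal is correct and uses the same ingredients as the paper's proof (the dictionary of Proposition~\ref{prop-trad-prop-br} together with the observation that $[C,K]=1$ makes the two operations of $B$ coincide, so that $C$ is abelian whenever $K$ is); the only difference is organisational, as you prove $(1)\Leftrightarrow(3)$ and $(2)\Leftrightarrow(3)$ where the paper runs the cycle $(1)\Rightarrow(2)\Rightarrow(3)\Rightarrow(1)$. No gap.
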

\begin{proof}
\ref{prop-ab-br-eq-cond-1} implies \ref{prop-ab-br-eq-cond-2}. Since $B$ is an abelian skew brace, then  $K$ and $C$ are abelian and $[C,K]=1$ by Proposition \ref{prop-add-gr-deriv}. Hence $G=[K]C$ is abelian.
 
It is clear that \ref{prop-ab-br-eq-cond-2} implies \ref{prop-ab-br-eq-cond-3}. Now if $K$ is abelian and $[C,K]=1$, then $B$ is an abelian skew brace by Proposition \ref{prop-trad-prop-br}. Therefore \ref{prop-ab-br-eq-cond-3} implies \ref{prop-ab-br-eq-cond-1}, and the circle of implications is complete. 
\end{proof}

\begin{cor}\label{prop-add-mul-br-eq}
\begin{enumerate}
\item $B=A_1+A_2$ if and only if, $K=L_1L_2$.
\item $B=A_1A_2$ if and only if, $C=(L_1D\cap C)(L_2D\cap C)$.
\end{enumerate}
\end{cor}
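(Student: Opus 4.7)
The plan is to translate each factorisation condition on the skew brace $B$ into a factorisation of an appropriate subgroup of the associated large trifactorised group $G$, by exploiting the dictionary established in Proposition~\ref{prop-3factAss-SubBr}. The key observation is that a subbrace $A_i$ of $B$ admits two different realisations inside $G$: its \emph{additive} copy $L_i\leq K$ and its \emph{multiplicative} copy $L_iD\cap C\leq C$. Statement~1 is a statement about the additive group of $B$ and Statement~2 about the multiplicative group, so each claim aligns naturally with exactly one of these two realisations.

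For Statement~1, the identification $A_i\leftrightarrow L_i$ is by construction an identification of subsets of the common underlying set $B=K$ that is compatible with the additive operation. The equality $B=A_1+A_2$ in $B$ and the equality $K=L_1L_2$ in $K$ (writing the group operation of $K$ multiplicatively on the right-hand side) are therefore literally the same equation, and no further computation is required beyond unpacking the identification.

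For Statement~2, I would invoke Proposition~\ref{prop-3factAss-SubBr}, which says that the multiplicative group of the subbrace associated with $L_i$ corresponds inside $G$ precisely to the subgroup $L_iD\cap C$ of $C$. Since the multiplicative group of $B$ is $C$, the product $B=A_1A_2$ (taken with the $\cdot$ operation) translates directly to the set equality $C=(L_1D\cap C)(L_2D\cap C)$ in $C$, and both implications follow immediately from this translation.

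The main difficulty, such as it is, is purely one of bookkeeping: one must not conflate the additive copy $L_i\leq K$ with the multiplicative copy $L_iD\cap C\leq C$ of the same subbrace, since these are in general distinct subgroups of $G$ that happen to correspond to the same subset of~$B$ via the bijective derivation $\delta$. Once this distinction is kept straight and Proposition~\ref{prop-3factAss-SubBr} is applied to each $A_i$, both equivalences reduce to an unpacking of the definitions.
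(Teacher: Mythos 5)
Your proposal is correct and follows essentially the same route as the paper: Statement~1 is immediate from the identification of $(B,+)$ with $K$, and Statement~2 rests on the fact that the multiplicative copy of $A_i$ inside $G$ is $\delta^{-1}(L_i)=L_iD\cap C$, which the paper cites directly and you obtain via Proposition~\ref{prop-3factAss-SubBr}. The bookkeeping point you emphasise (not conflating $L_i$ with $L_iD\cap C$) is exactly the content of the paper's one-line argument.
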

\begin{proof}
\begin{enumerate}
Statement 1 is clear. Now, $B = A_1A_2$ if and only if, $C = \delta^{-1}(L_1)\delta^{-1}(L_2)$. By \cite[Lemma 5.3]{ballesterbolinches2025categoriesskewleftbraces}, $\delta^{-1}(L_i)=L_iD\cap C$, $i = 1,2$. Hence Statement 2 holds.\qedhere
\end{enumerate}
\end{proof}

\section{Proof of Theorem~\ref{teo-ito-general}}\label{sec-ito-general}

In the following, we prepare the way to prove Theorem~\ref{teo-ito-general}. We begin with a basic result.

\begin{lemma}\label{lemma-prod-sum-ideals-is-B}
Let $B$ be a skew brace, and let $A_1$ be a subbrace of $B$ and $A_2$ a left ideal of $B$, then $A_1+A_2= A_1A_2$.
\end{lemma}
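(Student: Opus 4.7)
The plan is to exploit the identity $ab = a + \lambda_a(b)$ that connects the two operations of a skew brace, together with the fact that a left ideal is invariant under the lambda action. Both $A_1 + A_2$ and $A_1 A_2$ are to be interpreted here as the pointwise sum and product sets, so the statement reduces to showing that each element of one set lies in the other.

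First I would check that $\lambda_a(A_2) = A_2$ for every $a \in B$. One direction follows because $A_2$ being a left ideal means $a \ast b \in A_2$ for all $a \in B$ and $b \in A_2$, hence $\lambda_a(b) = (a \ast b) + b \in A_2$; since this holds for every $a \in B$, replacing $a$ by its multiplicative inverse and noting that $\lambda_{a^{-1}} = \lambda_a^{-1}$ in $\Aut(B, +)$ gives the reverse inclusion.

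Next I would prove the two inclusions. For $A_1 A_2 \subseteq A_1 + A_2$, pick $a \in A_1$ and $b \in A_2$ and write $ab = a + \lambda_a(b)$; since $\lambda_a(b) \in A_2$ this exhibits $ab$ as an element of $A_1 + A_2$. For the reverse inclusion $A_1 + A_2 \subseteq A_1 A_2$, given $a \in A_1$ and $b \in A_2$, set $c = \lambda_a^{-1}(b)$; by the invariance established above $c \in A_2$, and from $ac = a + \lambda_a(c) = a + b$ one reads off $a + b \in A_1 A_2$. Combining both inclusions yields the equality.

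There is no real obstacle here beyond recalling the correct translation between $+$ and $\cdot$; the only point worth being careful about is ensuring that the lambda-invariance of $A_2$ is an \emph{equality} and not merely an inclusion, so that the step producing $c = \lambda_a^{-1}(b) \in A_2$ is legitimate. Alternatively, one could translate the statement into the large trifactorised group using Corollary~\ref{prop-add-mul-br-eq}, where the left ideal hypothesis on $A_2$ reads $[C, L_2] \le L_2$ by Proposition~\ref{prop-trad-prop-br}, but the direct skew-brace computation above is shorter and more transparent.
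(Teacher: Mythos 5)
Your proof is correct and follows essentially the same route as the paper's: both use the identity $ab = a + \lambda_a(b)$ together with the $\lambda$-invariance of the left ideal $A_2$ to pass between $a_1a_2$ and $a_1 + a_2 = a_1\lambda_{a_1^{-1}}(a_2)$. Your explicit verification that $\lambda_a(A_2)=A_2$ is an equality (via $\lambda_{a^{-1}}=\lambda_a^{-1}$) is a point the paper leaves implicit, but it is the same argument.
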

\begin{proof}
Since $A_2$ is $\lambda$-invariant, it follows that $a_1+a_2=a_1\lambda_{a_1^{-1}}(a_2) \in A_1A_2$. Furthermore, $a_1*a_2 \in B*A_2 \subseteq A_2$. Therefore $a_1a_2=a_1+a_1*a_2+a_2 \in A_1 + A_2$ and so $A_1+A_2= A_1A_2$.\qedhere
\end{proof}
\begin{prop}\label{prop-br-comm-car}
Let $B$ be a skew brace, then 
$$[B,B]^B=B*B+[B,B]_+=B*B+[B,B]_\bullet.$$
\end{prop}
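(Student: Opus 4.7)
The first equality, $[B,B]^B = B*B + [B,B]_+$, is \cite[Theorem~3.3]{BallesterEstebanFerraraPerezCTrombetti-arXiv-cent-nilp}, so the task reduces to proving that $B*B + [B,B]_+ = B*B + [B,B]_\bullet$ as subgroups of $(B,+)$.

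The plan is to pass to the quotient $\overline{B} = B/(B*B)$, which is well-defined because $B*B$ is an ideal of $B$. The first observation is that $\overline{B}$ is a trivial skew brace: for every $a,b\in B$ we have $a*b\in B*B$, so $\overline{a}*\overline{b}=0$. Using the identity $xy = x + x*y + y$ this forces $\overline{a}\,\overline{b} = \overline{a}+\overline{b}$, so the two group operations on $\overline{B}$ coincide. In particular, the multiplicative inverse of $\overline{a}$ agrees with its additive inverse $-\overline{a}$.

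With this in hand the key computation is
\[ \overline{[a,b]_\bullet} \;=\; \overline{a}\,\overline{b}\,\overline{a}^{-1}\overline{b}^{-1} \;=\; \overline{a}+\overline{b}-\overline{a}-\overline{b} \;=\; \overline{[a,b]_+}, \]
so every multiplicative commutator $[a,b]_\bullet$ differs from the corresponding additive commutator $[a,b]_+$ by an element of $B*B$ (with respect to $+$). The final step is then immediate: this tells us $[a,b]_\bullet \in B*B + [B,B]_+$ and $[a,b]_+ \in B*B + [B,B]_\bullet$, and since both sums already contain $B*B$, the two additive subgroups coincide.

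The only delicate point is notational rather than mathematical: the sum $B*B + [B,B]_\bullet$ must be read as the subgroup of $(B,+)$ generated by $B*B \cup [B,B]_\bullet$, since $[B,B]_\bullet$ is a priori only a subgroup of $(B,\cdot)$. Once this convention is fixed, the whole argument takes place inside the trivial brace $\overline{B}$, where additive and multiplicative commutators are literally the same element, and no further computation is required.
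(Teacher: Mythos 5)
Your proof is correct and follows essentially the same route as the paper: both pass to the quotient $B/(B*B)$, observe that it is a trivial skew brace so that additive and multiplicative commutators coincide there, and pull back. The paper phrases this at the level of commutator subgroups (obtaining $B^2[B,B]_\bullet = B^2+[B,B]_+$ and then converting the product to a sum via Lemma~\ref{lemma-prod-sum-ideals-is-B}), whereas you argue on individual commutators; your version is fine because the two operations on the quotient coincide, so the subgroups generated by $\{\overline{[a,b]_\bullet}\}$ and $\{\overline{[a,b]_+}\}$ are literally the same.
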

\begin{proof}
By Theorem \cite[Theorem ~3.3]{BallesterEstebanFerraraPerezCTrombetti-arXiv-cent-nilp} $[B,B]^B = B^2 + [B, B]_+$. For the other equality, notice that $B/B^2$ is a trivial skew brace, hence,
\begin{align*}
(B^2[B,B]_\bullet)/B^2=[B/B^2,B/B^2]_\bullet=[B/B^2,B/B^2]_+=(B^2+[B,B]_+)/B^2
\end{align*}
Therefore, $B^2[B,B]_\bullet=B^2+[B,B]_+$. The result follows from Lemma \ref{lemma-prod-sum-ideals-is-B} because $B^2$ is left ideal.
\end{proof}

Although the next results will be applied to large trifactorised groups, we consider convenient to prove them for trifactorised groups.   A trifactorised group is a $4$-tupla $(G,K,H,E)$, where $G$ is a group, $K$ is a normal subgroup of $G$, and $H$ and $E$ are subgroups of $G$ satisfying $G=KE=KH=HE, K\cap E=H\cap E=1$. These groups are naturally associated to skew braces (see \cite{ballesterbolinches2025categoriesskewleftbraces}). 

The next result appears in \cite[Propositions~2.1 and 2.2]{CascellaGariulo2025}. We give a slightly different proof for the sake of completeness.
\begin{prop}\label{prop-tsang-3.3}
Let $(G,K,H,E)$ be a trifactorised group, $L_1,L_2\leq K$ and $E_1,E_2\leq E$ such that $K=L_1L_2$, $E=E_1E_2$ and $[E_i,L_i]=1$ for $i=1,2$. Then $[E_1,L_2]$ and $[E_2,L_1]$ are normal in $G$ and $[E,K]=[E_1,L_2][E_2,L_1]$.
\end{prop}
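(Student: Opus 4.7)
The plan rests on two commutator identities, $[xy, z] = x[y, z] x^{-1} \cdot [x, z]$ and $[x, yz] = [x, y] \cdot y[x, z] y^{-1}$, applied together with the hypotheses $[E_1, L_1] = [E_2, L_2] = 1$. Writing a general generator of $[E, K]$ as $[e_1 e_2, l_1 l_2]$ with $e_i \in E_i$ and $l_j \in L_j$ and expanding by these identities, the two diagonal commutators $[e_1, l_1]$ and $[e_2, l_2]$ collapse, leaving
\[
  [e_1 e_2, l_1 l_2] = e_1 [e_2, l_1] e_1^{-1} \cdot l_1 [e_1, l_2] l_1^{-1}.
\]
Thus $[E, K]$ is generated, as a subgroup of $K$, by $E_1$-conjugates of elements of $[E_2, L_1]$ together with $L_1$-conjugates of elements of $[E_1, L_2]$.

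I would next show that $[E_1, L_2]$ is normal in $G$; the claim for $[E_2, L_1]$ is entirely symmetric. Since $G = L_1 L_2 E_1 E_2$, this reduces to four verifications. Three of them fall straight out of the commutator identities above: $[e_1 e_1', l_2] = e_1 [e_1', l_2] e_1^{-1} \cdot [e_1, l_2]$ yields $E_1$-normality, $[e_1, l_2 l_2'] = [e_1, l_2] \cdot l_2 [e_1, l_2'] l_2^{-1}$ yields $L_2$-normality, and $E_2$-normality follows from $e_2 [e_1, l_2] e_2^{-1} = [e_2 e_1 e_2^{-1}, l_2]$ (via $[E_2, L_2] = 1$) by decomposing $e_2 e_1 e_2^{-1} = e_1' e_2'$ along $E = E_1 E_2$ and applying $[E_2, L_2] = 1$ once more to collapse the output to $[e_1', l_2] \in [E_1, L_2]$.

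The main obstacle is the fourth verification, $L_1$-normality of $[E_1, L_2]$. A direct attempt gives $l_1 [e_1, l_2] l_1^{-1} = [e_1, l_1 l_2 l_1^{-1}]$ from $[E_1, L_1] = 1$, and decomposing $l_1 l_2 l_1^{-1} = l_1' l_2'$ in $L_1 L_2$ produces $l_1' [e_1, l_2'] l_1'^{-1}$: one $L_1$-conjugate of an element of $[E_1, L_2]$ is merely exchanged for another, without landing in $[E_1, L_2]$ itself. To overcome this my plan is to introduce the normal closures $M_1 := [E_1, L_2]^{L_1}$ and $M_2 := [E_2, L_1]^{E_1}$, and verify first that $M_1 M_2$ is a subgroup of $K$ that is normal in $G$ and equal to $[E, K]$ (using the three easy normalities together with the generating formula). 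The delicate point is then to absorb the extra conjugates, namely to show that the correction term $[l_1, [e_1, l_2]] = l_1 [e_1, l_2] l_1^{-1} \cdot [e_1, l_2]^{-1}$ in fact already lies in $[E_2, L_1]$. I expect this can be achieved by the Hall--Witt identity applied to the triple $(L_1, E_1, L_2)$, perhaps combined with the structural information carried by the third subgroup $H$ of the trifactorisation, which would be the only point where $H$ plays any role.

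Once both $[E_1, L_2]$ and $[E_2, L_1]$ are known to be normal in $G$, their product is automatically a subgroup, and the generating formula of the first step then collapses all outer conjugations and yields $[E, K] = [E_1, L_2] \cdot [E_2, L_1]$; the reverse containment is immediate, so the proof closes.
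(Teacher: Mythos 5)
Your overall route---expand $[e_1e_2,l_1l_2]$ by the standard commutator identities, kill the diagonal terms, and reduce everything to the normality of $[E_1,L_2]$ and $[E_2,L_1]$ in $G$---is the same as the paper's, and your three ``easy'' normality checks are correct. But the step you yourself flag as the main obstacle, the $L_1$-normality of $[E_1,L_2]$, is a genuine gap as written: you only sketch a hoped-for argument via normal closures and the Hall--Witt identity, you never verify that the correction term $[l_1,[e_1,l_2]]$ lands where you want it (and in fact it lies in $[E_1,L_2]$ rather than $[E_2,L_1]$), and the suggestion that the subgroup $H$ of the trifactorisation might be needed is a red herring---$H$ plays no role in this proposition.

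The missing observation is small: since $L_1$ and $L_2$ are subgroups with $K=L_1L_2$, one also has $K=L_2L_1$, so you may decompose ${}^{l_1}l_2=l_2'l_1'$ in \emph{that} order. Then, using $[E_1,L_1]=1$, ${}^{l_1}[e_1,l_2]=[{}^{l_1}e_1,{}^{l_1}l_2]=[e_1,l_2'l_1']=[e_1,l_2']\cdot{}^{l_2'}[e_1,l_1']=[e_1,l_2']\in[E_1,L_2]$, and $L_1$-normality follows at once; your failed attempt differs only in having chosen the order $l_1'l_2'$, which puts the surviving conjugation on the wrong factor. The paper packages exactly this as the identities $[E_1,K]=[E_1,L_2]$ and $[E,L_2]=[E_1,L_2]$ combined with the standard fact that $[A,B]$ is normal in $\langle A,B\rangle$ (Huppert, Kapitel~III, Hilfssatz~1.6), so that $K$ and $E$ both normalise $[E_1,L_2]$ and hence $G=KE$ does. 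Once this is repaired, the remainder of your plan (the generating formula and the two inclusions giving $[E,K]=[E_1,L_2][E_2,L_1]$) goes through and no normal closures or Hall--Witt identity are needed.
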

\begin{proof}
Since $[E_1,K]=[E_1,L_1L_2]=[E_1,L_2]$ and $[E,L_2]=[E_1E_2,L_2]=[E_1,L_2]$, $K$ and $E$ normalises $[E_1,L_2]$ by \cite[Kapitel~III, Hilfssatz~1.6]{Huppert67}. Hence $[E_1,L_2]$ is normal in $KE=G$. Analogously, $[E_2,L_1]$ is normal in $G$.

Let $e\in E$ and $k\in K$, then there exists $l_i\in L_i$ for $i=1,2$ such that $k=l_1l_2$. Hence $[e,k]=[e,l_1l_2]=[e,l_1]\conj{l_1}{[e,l_2]} = [e,l_1]\conj{l_1}{[e_1,l_2]}$ for some $e_1 \in E_1$. Hence $[e, k] \in [E,L_1][E_1,L_2] = [E_2,L_1][E_1,L_2]$. Then $[E,K] \subseteq  [E_1,L_2][E_2,L_1]$ and $[E,K]=[E_1,L_2][E_2,L_1]$.
\end{proof}

Skew brace version of the above result provides a slightly stronger version of  {\cite[Proposition 3.3]{Tsang_2024}}.

\begin{cor}
Let $B$ be a skew brace and $A_1,A_2$ two trivial subbraces such that $B=A_1+A_2$ and $B=A_1A_2$, then $A_1*A_2$ and $A_2*A_1$ are strong left ideals of $B$. Furthermore, $B*B=A_1*A_2+A_2*A_1$.
\end{cor}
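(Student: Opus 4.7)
The plan is to translate the statement into the language of the large trifactorised group $(G,K,D,C)$ associated with $B$ and then invoke Proposition~\ref{prop-tsang-3.3} directly. Let $L_i\leq K$ be the subgroup corresponding to the subbrace $A_i$ for $i=1,2$, and set $E_i=L_iD\cap C=\delta^{-1}(L_i)$, which are subgroups of $C$.

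First I would verify the three hypotheses of Proposition~\ref{prop-tsang-3.3}. The factorisations $B=A_1+A_2$ and $B=A_1A_2$ give $K=L_1L_2$ and $C=E_1E_2$ via Corollary~\ref{prop-add-mul-br-eq}. The hypothesis that each $A_i$ is a trivial skew brace yields $[E_i,L_i]=[L_iD\cap C,L_i]=1$ by Proposition~\ref{prop-trad-prop-br}(1). Hence Proposition~\ref{prop-tsang-3.3} applies and gives that $[E_1,L_2]$ and $[E_2,L_1]$ are normal subgroups of $G$, together with the factorisation
\[
 [C,K]=[E_1,L_2][E_2,L_1].
\]

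The final step is to translate back. By Proposition~\ref{prop-add-gr-deriv}, the subgroup $A_i*A_j$ of $(B,+)$ corresponds precisely to $[E_i,L_j]$ as a subgroup of $K$. Since $[E_1,L_2]$ and $[E_2,L_1]$ are normal in $G$, Proposition~\ref{prop-trad-prop-br}(4) tells us that $A_1*A_2$ and $A_2*A_1$ are strong left ideals of $B$. Moreover, since the operation $+$ on $B$ is the group operation of $K$, the group-theoretic product $[E_1,L_2][E_2,L_1]$ corresponds exactly to the additive sum $A_1*A_2+A_2*A_1$, while $[C,K]$ corresponds to $B*B$ again by Proposition~\ref{prop-add-gr-deriv}. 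Thus $B*B=A_1*A_2+A_2*A_1$.

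There is no substantive obstacle: the work has already been done in setting up the dictionary between skew braces and their trifactorised groups (Section~\ref{sec-traduct}) and in Proposition~\ref{prop-tsang-3.3}. The only care needed is to check that the natural candidates $E_i=L_iD\cap C$ yield the right hypotheses, which is immediate from Proposition~\ref{prop-trad-prop-br}(1) and Corollary~\ref{prop-add-mul-br-eq}.
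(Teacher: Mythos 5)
Your proposal is correct and follows exactly the route the paper intends: the corollary is stated immediately after Proposition~\ref{prop-tsang-3.3} precisely because it is its translation through the dictionary of Section~\ref{sec-traduct} (Corollary~\ref{prop-add-mul-br-eq} for the two factorisations, Proposition~\ref{prop-trad-prop-br}(1) for triviality giving $[E_i,L_i]=1$, and Propositions~\ref{prop-add-gr-deriv} and~\ref{prop-trad-prop-br}(4) for the way back). The paper leaves this translation implicit, and you have supplied it accurately, including the correct identification of the group product $[E_1,L_2][E_2,L_1]$ inside $K$ with the additive sum $A_1*A_2+A_2*A_1$.
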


\begin{teo}\label{teo-3fact-ito-general}
Let $(G,K,H,E)$ be a trifactorised group, $L_1,L_2\leq K$ and $E_i=L_iH\cap E \leq E$  such that:
\begin{enumerate}

\item $E=E_1E_2$ and $K=L_1L_2$.\label{teo-3fact-ito-general-cond-2}
\item $[E,L_1]\leq L_1$ and $[E_1,K]\leq L_1$.\label{teo-3fact-ito-general-cond-3}
\item $L_iH\cap L_iE$ is an abelian subgroup of $G$ for $i=1,2$.\label{teo-3fact-ito-general-cond-1}
\item $K'H\cap E\leq E'([E,K]H\cap E)$.\label{teo-3fact-ito-general-cond-4}
\end{enumerate}
Then $K'[E,K]$ is abelian and $[K'[E,K]H\cap E,K'[E,K]]=1$.
\end{teo}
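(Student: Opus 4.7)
The strategy is to show that $K'[E,K]\leq L_1$; since $L_1$ is abelian by condition~\ref{teo-3fact-ito-general-cond-1}, both assertions of the theorem then follow immediately.

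As a first step, I would use Proposition~\ref{prop-tsang-3.3} to control $[E,K]$. Condition~\ref{teo-3fact-ito-general-cond-1} says that $L_iH\cap L_iE$ is abelian for $i=1,2$; since $E_i=L_iH\cap E$ lies inside it, we get $[E_i,L_i]=1$. Together with condition~\ref{teo-3fact-ito-general-cond-2} this puts us in the hypothesis of Proposition~\ref{prop-tsang-3.3}, yielding $[E,K]=[E_1,L_2][E_2,L_1]$ with both factors normal in $G$. Now condition~\ref{teo-3fact-ito-general-cond-3} gives $[E_1,L_2]\leq[E_1,K]\leq L_1$ and $[E_2,L_1]\leq[E,L_1]\leq L_1$, so $[E,K]\leq L_1$, an abelian normal subgroup of $G$.

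For the harder inclusion $K'\leq L_1$, note first that $K=L_1L_2$ is the product of two abelian subgroups, so the classical theorem of It\^o gives that $K$ is metabelian and $K'$ is abelian. To push $K'$ inside $L_1$, I would use condition~\ref{teo-3fact-ito-general-cond-4} as follows. For $k\in K'$, the element $\delta^{-1}(k)$ lies in $K'H\cap E$, and condition~\ref{teo-3fact-ito-general-cond-4} lets us write $\delta^{-1}(k)=e'e_0$ with $e'\in E'$ and $\delta(e_0)\in[E,K]$. The 1-cocycle identity $\delta(xy)=\delta(x)\conj{x}{\delta(y)}$ then gives $k=\delta(e')\cdot\conj{e'}{\delta(e_0)}$, and since $[E,K]\leq L_1$ is normal in $G$, the second factor lies in $L_1$. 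Hence $K'\subseteq\delta(E')\cdot L_1$ as a subset of $K$.

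The main obstacle is to establish that $\delta(E')\subseteq L_1$, or equivalently $E'\leq E_1$. Since $E=E_1E_2$ with $E_1,E_2$ abelian by condition~\ref{teo-3fact-ito-general-cond-1}, this reduces to showing that $E_1$ is normal in $E$, i.e.\ that $E_2$ normalises $E_1=L_1H\cap E$. I expect this to follow from a careful use of the right ideal hypothesis $[E_1,K]\leq L_1$ together with the abelian structure of the diagonal inside $L_2H\cap L_2E$, which controls how $E_2$-conjugation acts on the $L_1$- and $H$-components of elements of $E_1$; this is the technical heart of the proof.

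Once $K'\leq L_1$ is in hand, $K'[E,K]\leq L_1$ is abelian, giving the first conclusion. For the second, $K'[E,K]H\cap E\leq L_1H\cap E=E_1$, and since both $L_1$ and $E_1$ lie in the abelian subgroup $L_1H\cap L_1E$ of $G$, we have $[E_1,L_1]=1$. Therefore $[K'[E,K]H\cap E,K'[E,K]]\leq[E_1,L_1]=1$, as required.
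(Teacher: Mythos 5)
Your opening steps coincide with the paper's: from $E_i\leq L_iH\cap L_iE=T_i$ you get $[E_i,L_i]=1$, Proposition~\ref{prop-tsang-3.3} gives $[E,K]=[E_1,L_2][E_2,L_1]$, and Condition~2 pushes both factors into $L_1$, so $[E,K]\leq L_1$. After that your route diverges, and it has a genuine gap exactly where you flag it: you never prove $E'\leq E_1$ (equivalently $K'\leq L_1$), and your entire argument stands or falls with that claim. The hypotheses give $[E,L_1]\leq L_1$ and $[E_1,K]\leq L_1$, but nothing that makes $E_2$ normalise $E_1$, nor $L_2$ normalise $L_1$ (which is what $K'=[L_1,L_2]\leq L_1$ amounts to); in skew brace terms you are asking for the multiplicative group of $A_1$ to be normal in $(B,\cdot)$, which is part of the ``ideal'' condition and strictly stronger than ``$A_1$ is a left and right ideal''. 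Tellingly, the paper's own proof never claims these containments: it only establishes $K'[E,K]H\cap E\leq E'E_1$ and then kills the relevant commutators by other means. Note also that without $K'[E,K]\leq L_1$ even your first conclusion collapses, since Itô applied to $K=L_1L_2$ only gives that $K'$ is abelian, not that $[K',[E,K]]=1$.

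The ingredient you are missing is that $G$ itself is a product of two abelian subgroups: since $[E,L_1]\leq L_1$, the group $E_2$ normalises $L_1$, hence $T_2T_1=L_2E_2L_1E_1=L_2L_1E_2E_1=KE=G$, and Itô's theorem applied to $G=T_1T_2$ (rather than to $K$ or $E$ separately) gives that $G'$ is abelian. This immediately yields the first conclusion, because $K'[E,K]\leq G'$. For the second, the paper combines $K'[E,K]H\cap E\leq E'E_1$ (a modular-law computation plus Condition~4) with two facts: $[E_1,K']=1$, obtained from the three subgroups lemma applied to $K'=[L_1,L_2]$ using $[E_1,L_1]=1$ and $[L_2,E_1]\leq L_1$ with $L_1$ abelian; and $[E',K'[E,K]]=1$, which holds simply because both subgroups lie in the abelian group $G'$. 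I would redirect your effort along these lines rather than trying to establish $E'\leq E_1$.
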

\begin{proof}
Let us denote by $T_i=L_iH\cap L_iE$, $i = 1,2$. Then $L_iE_i = L_i(L_iH\cap E) = T_i$, $i = 1,2$. By Statement \ref{teo-3fact-ito-general-cond-1}, $E_i$ is an abelian subgroup of $G$ and $[E_i,L_i] \leq [T_i, T_i] = 1$, $i = 1, 2$.  By Statement
 \ref{teo-3fact-ito-general-cond-3}, $E_2$ normalizes $L_1$. Therefore, $T_2T_1=L_2E_2L_1E_1=L_2L_1E_2E_1=KE=G$ and $G$ is the product of two abelian groups. By Itô's theorem (\cite[Kapitel~VI, Satz~4.4]{Huppert67}), $G'$ is abelian. In particular, $K'[E,K]$ is abelian.

By Proposition \ref{prop-tsang-3.3}, $[E,K]=[E_1,L_2][E_2,L_1]$.
By Statement \ref{teo-3fact-ito-general-cond-3}, $[E_1,L_2]$ and $[E_2,L_1]$ are contained in $L_1$. Therefore, $[E,K]\leq L_1$ and $[E,K]H\cap E\leq E_1$. Now, $(K'H\cap E)([E,K]H\cap E) = (K'H \cap E)[E, K]H \cap E = [E, K](K'H \cap E)H \cap E = [E, K](K'H \cap EH) \cap E = [E, K]K'H \cap E$. Hence, by Statement \ref{teo-3fact-ito-general-cond-4}, 
$[E, K]K'H \cap E \leq (E'([E,K]H\cap E))([E,K]H\cap E) \leq E'[E, K]H \cap E = E'([E, K]H \cap E) \leq E'E_1$. 

Moreover, since $K=L_1L_2$ and $L_1,L_2$ are abelian, it follows that $K'=[L_1,L_2]$. As $[[E_1, L_1], L_2] = 1$ ($[E_1, L_1] = 1$) and $[[L_2, E_1], L_1] =1$  ($[L_2, E_1] \leq L_1$), it follows that $[[L_1, L_2], E_1] = 1$ by the three subgroups lemma (\cite[Kapitel~III, Hilfssatz~1.10]{Huppert67}). Hence $[E_1, K'] = 1$. Since $G'$ is abelian, $[E'E_1,K'[E,K]]= [E_1, K'] = 1$ (note that $[[E,K],E_1]\leq [L_1,E_1]=1$). Consequently, $[K'[E,K]H\cap E,K'[E,K]]=1$, as required. \qedhere

\end{proof}

\begin{proof}[{Proof of Theorem~\ref{teo-ito-general}}]
Let $(G,K,D,C)$ be the large trifactorised group associated to $B$ and let $L_i$ be the additive group of $A_i$, $i = 1,2$. We will apply Theorem \ref{teo-3fact-ito-general} to this factorised group , where $D = H$ and $C = E$. According to Proposition~\ref{prop-3factAss-SubBr}, $E_i=L_iD\cap C$ is a subgroup of $G$, $i = 1,2$. Suppose that $B = A_1A_2$. Since $A_1$ is a left ideal of $B$, it follows that $B = A_1 + A_2$ by Lemma \ref{lemma-prod-sum-ideals-is-B}. Suppose $B = A_1+ A_2$. Since $A_1$ is a left ideal of $B$, it follows that $B = A_1A_2$ by Lemma \ref{lemma-prod-sum-ideals-is-B}. In any case, we have $B = A_1A_2 = A_1 + A_2$. By Corollary~\ref{prop-add-mul-br-eq}, $K = L_1L_2$ and $C = E_1E_2$. Since $A_i$ is abelian, $i = 1, 2$, it follows that $L_iH\cap L_iE$ is an abelian subgroup of $G$ for $i=1,2$ by Proposition~\ref{prop-3factAss-SubBr} and Corollary~\ref{prop-ab-br-eq}. Furthermore, since $A_1$ is left and right ideal of $B$, it follows that $[C, L_1] \leq L_1$ and $[E_1, K] \leq L_1$ by Proposition~\ref{prop-trad-prop-br}. By Proposition~\ref{prop-3factAss-SubBr}, the subbrace $[B,B]_+$ corresponds to the subgroup $K'D\cap C$ of $C$, $[B,B]_\bullet$ is just $C'$ in $C$ and the subbrace $B*B$ of $B$ corresponds to the subgroup $[C, K]D \cap C$ of $C$. By Proposition~\ref{prop-br-comm-car}, $[B,B]_+\leq B*B+[B,B]_\bullet=[B,B]_\bullet+B*B=[B,B]_\bullet(B*B)$. Therefore $K'D\cap C \leq C'([C, K]D \cap C)$. Applying Theorem~\ref{teo-3fact-ito-general}, it follows that $K'[C,K]$ is abelian and $[K'[C,K]D\cap C,K'[C,K]]=1$. Since $K'[C,K]$ corresponds to the skew brace commutator $[B,B]^B$ and $([K'[C,K]D\cap K'[C,K]C, K'[C,K], K'[C,K]C\cap D, K'[C,K]D\cap C)$ is the large trifactorised groups associated to $[B,B]^B$, we can apply Corollary~\ref{prop-ab-br-eq} to conclude that $[B,B]^B$ is an abelian skew brace. \qedhere
\end{proof}

\section{Proof of Theorem~\ref{teo2}}

\begin{proof}[{Proof of Theorem~\ref{teo2}}]
\begin{enumerate}
\item Let $a\in\Fix(B)$, then there exists $a_i\in A_i$, $i=1,2$, such that $a=a_1+a_2$. Let $b\in B$, then there exists $b_i\in A_i$, $i=1,2$, such that $b=b_1b_2$. Then $a = a_1+a_2= \lambda_{b_2}(a_1+a_2)=\lambda_{b_2}(a_1)+\lambda_{b_2}(a_2)=\lambda_{b_2}(a_1)+a_2$. Hence $\lambda_{b_2}(a_1)=a_1$. Since $A_1$ is trivial $\lambda_{b}(a_1)=\lambda_{b_1}\p{\lambda_{b_2}(a_1)}=\lambda_{b_1}(a_1)=a_1$. Hence $a_1\in\Fix(B)$. Analogously, $a_2\in\Fix(B)$. Therefore, $a\in (\Fix(B)\cap A_1)+(\Fix(B)\cap A_2)$. 
Furthermore, if $z \in A_1 \cap A_2$ and $b=b_1b_2$, $b_i\in A_i$, $i=1,2$, then $\lambda_{b}(z)=\lambda_{b_1}\p{\lambda_{b_2}(z)} = z$ since $A_i$, $i=1,2$, are trivial skew braces. Consequently, $\Fix(B)$ is a factorised left ideal of $B$. 

Let $a\in\ker\lambda$ and $b\in B$, then there exists $a_i,b_i\in A_i$ such that $a=a_1a_2$ and $b=b_1+b_2$. Hence $\lambda_{a_1}(b)=\lambda_{a_1}(b_1)+\lambda_{a_1}(b_2)=b_1+\lambda_{a_1}\p{\lambda_{a_2}(b_2)}=b_1+\lambda_a(b_2)=b_1+b_2=b$. Thus $a_1\in\ker\lambda$. Analogously $a_2\in\ker\lambda$. Therefore, $a\in(\ker\lambda\cap A_1)(\ker\lambda\cap A_2)$, and  $\ker\lambda =( \ker\lambda \cap A_1)+(\ker\lambda \cap A_2)$.
Furthermore, if $z \in A_1 \cap A_2$ and $b=b_1b_2$, $b_i\in A_i$, $i=1,2$, then $\lambda_z(b) = \lambda_{z}(b_1)+\lambda_{z}(b_2 ) = b_1 + b_2 = b$ since $A_1$ and $A_2$ are trivial skew braces. Therefore $\ker\lambda$ is factorised.

\item Assume that $A_1$ and $A_2$ are abelian skew braces. We prove first that $A_1\cap A_2\leq Z(B)$. Let $a\in A_1\cap A_2$ and $b\in B$. There exists $b_i,b_i'\in A_i$, $i=1,2$ such that $b=b_1+b_2=b_1'b_2'$. Therefore, $\lambda_a(b)=\lambda_a(b_1+b_2)=\lambda_a(b_1)+\lambda_a(b_2)=b_1+b_2$, $[a,b]_+=[a,b_1+b_2]_+=[a,b_1]_+=1$, $[a,b]_\bullet=[a,b_1'b_2']_\bullet=[a,b_1']_\bullet =1$. Hence $a\in Z(B)$. 

Let $a\in\Soc(B)=\ker\lambda\cap Z(B,+)$ and $b\in B$. By Statement~(1),  there exist $a_i\in \ker\lambda\cap A_i$, $i=1,2$, such that $a=a_1+a_2$. Let $b_i\in A_i$, $i=1,2$, such that $b=b_1+b_2$. Then, $1=[a,b_2]_+=[a_1+a_2,b]_+=[a_1 ,b_2]_+$ because $A_2$ is abelian. Since $A_1$ is abelian, it follows that $[a_1,b_1]_+=1$ and so $[a_1,b]_+=1$ and $a_1\in Z(B,+)$. Analogously $a_2\in  Z(B,+)$. Therefore, $a\in (\Soc(B)\cap A_1)+(\Soc(B)\cap A_2)$. Then $\Soc(B)=(\Soc(B)\cap A_1)+(\Soc(B)\cap A_2)$ and $\Soc(B)$ is a factorised ideal of $B$. .

Let $a\in Z(B)=\Soc(B)\cap Z(B,\cdot)$ and $b\in B$. Since $\Soc(B)$ is factorised and trivial, there exist $a_i\in \Soc(B)\cap A_i$, $i=1,2$, such that $a=a_1a_2$. Let  $b_i\in A_i$, $i=1,2$, such that $b=b_1b_2$. Then, $1=[a,b_2]_\bullet=[a_1a_2,b_2]_\bullet=[a_1,b_2]_\bullet$ because $A_2$ is abelian. Since $A_1$ is abelian $[a_1,b_1]_\bullet =1$. Hence $[a_1,b]_\bullet =1$ and $a_1\in Z(B,\cdot)$. Analogously $a_2\in Z(B,\cdot)$. Therefore, $a\in (Z(B)\cap A_1)+(Z(B)\cap A_2)$. Then $Z(B)=(Z(B)\cap A_1)+(Z(B)\cap A_2)$ and $Z(B)$ is a factorised ideal of $B$. 
\end{enumerate}
\end{proof}

\section{Proof of Theorem~\ref{teo3}}

We need a couple of preliminary results. 

\begin{lemma}\label{lemma-3fact-fact-centraliser}
Let $(G,K,H,E)$ be a trifactorised group and $L_1,L_2\leq K$ such that:
\begin{itemize}
\item $T_i=L_iH\cap L_iE$ is an abelian subgroup of $G$ for $i=1,2$.
\item $K=L_1L_2$ and $G=T_1T_2$.
\end{itemize}
Let $X$ be a subgroup of $G$ such that $X=(X\cap T_1)(X\cap T_2)$. Then $C_K(X)=(C_K(X)\cap L_1)(C_K(X)\cap L_2)$.
\end{lemma}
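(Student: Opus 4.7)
The plan is to fix an arbitrary $c \in C_K(X)$, use the decomposition $K = L_1L_2$ to write $c = l_1 l_2$ with $l_i \in L_i$, and then show that each $l_i$ individually lies in $C_K(X)$. The reverse inclusion $(C_K(X) \cap L_1)(C_K(X) \cap L_2) \subseteq C_K(X)$ is immediate because $L_1, L_2 \leq K$ and a product of two elements that centralise $X$ still centralises $X$.

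As a preliminary, I would record that $L_i \leq T_i$ trivially from the definition $T_i = L_iH \cap L_iE$, so in fact $L_i \leq T_i \cap K = L_i$ by Dedekind's modular law using $H \cap K = E \cap K = 1$. In particular, the abelianness of $T_i$ guarantees that $l_i$ commutes with every element of $T_i$, and therefore with every element of $X \cap T_i$.

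The core of the argument is a peel-off computation. Take any $x_2 \in X \cap T_2$. The centrality relation $l_1 l_2 x_2 = x_2 l_1 l_2$ combined with $l_2 x_2 = x_2 l_2$ (from abelianness of $T_2$) yields $l_1 x_2 = x_2 l_1$. Symmetrically, for any $x_1 \in X \cap T_1$, abelianness of $T_1$ gives $l_1 x_1 = x_1 l_1$, which then forces $l_2 x_1 = x_1 l_2$ from $l_1 l_2 x_1 = x_1 l_1 l_2$. Hence $l_1$ centralises both $X \cap T_1$ (via abelianness of $T_1$) and $X \cap T_2$ (via the peel-off), and therefore centralises $X = (X \cap T_1)(X \cap T_2)$; the same conclusion for $l_2$ follows by exchanging the roles of the indices. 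Since $l_i \in L_i \leq K$, this gives $l_i \in C_K(X) \cap L_i$, and $c = l_1 l_2$ lies in $(C_K(X) \cap L_1)(C_K(X) \cap L_2)$, as required.

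I do not anticipate a serious obstacle: the proof is essentially a short trick that exploits the abelianness of $T_1$ and $T_2$ in order to separate the two $L_i$-components of a centralising element. The only mildly delicate point is the preliminary observation $L_i \leq T_i$, which is what lets the abelian hypothesis be invoked, and the symmetric bookkeeping in the peel-off step, where one must use abelianness in the correct factor $T_j$ when cancelling.
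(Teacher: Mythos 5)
Your proof is correct and follows essentially the same route as the paper's: write $c=l_1l_2$ with $l_i\in L_i\leq T_i$ and peel each factor off against $X\cap T_1$ and $X\cap T_2$ using the abelianness of the $T_i$, which is exactly the paper's cancellation $1=[t,k_1k_2]=[t,k_1]\,{}^{k_1}[t,k_2]=[t,k_1]$ written multiplicatively. The only quibble is that your Dedekind aside appeals to $H\cap K=1$, which is not part of the paper's definition of a trifactorised group (only $K\cap E=H\cap E=1$ are assumed); this is harmless because the computation of $T_i\cap K$ is never used and the needed inclusion $L_i\leq T_i$ is immediate.
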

\begin{proof}
Let $k\in C_K(X)$. Let $k_i\in L_i$, $i=1,2$ such that $k=k_1k_2$. Note that $T_1$ centralises $k_1$ and if $t\in X\cap T_2$, we have that $1=[t,k]=[t,k_1k_2]=[t,k_1]\conj{k_1}{[t,k_2]}=[t,k_1]$. Hence $X\cap T_2$ centralises $k_1$ too. Therefore, $k_1$ centralises $(X\cap T_1)(X\cap T_2)=X$, and so $k_1\in C_K(X)$. Analogously, $k_2\in C_K(X)$. Therefore, $k\in (C_K(X)\cap L_1)(C_K(X)\cap L_2)$, and
 $C_K(X)=(C_K(X)\cap L_1)(C_K(X)\cap L_2)$.
\end{proof}

\begin{teo}\label{teo-3fact-sli-cont}
Let $(G,K,H,E)$ be a finite non-trivial trifactorised group, $L_1,L_2\leq K$ and $E_i=L_iH\cap E \leq E$ such that:
\begin{itemize}
\item $E=E_1E_2$ and $K=L_1L_2$.
\item $L_iH\cap L_iE$ is an abelian subgroup of $G$ for $i=1,2$.
\item $[E,L_1]\leq L_1$.
\end{itemize}
Then there exists a non-trivial normal subgroup of $G$ contained in $L_1$ or $L_2$.
\end{teo}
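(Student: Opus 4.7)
The approach is to use the centraliser factorisation of Lemma~\ref{lemma-3fact-fact-centraliser} to locate a suitable abelian subgroup of $K$ that is simultaneously $G$-normal and contained in $L_1$ or $L_2$.

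First, as in the proof of Theorem~\ref{teo-3fact-ito-general}, the hypotheses give $G = T_2T_1 = L_2E_2L_1E_1 = L_2L_1E_2E_1 = KE$ (using that $E_2$ normalises $L_1$), so $G$ is the product of two abelian subgroups and hence metabelian by Itô's theorem; in particular, $K$ is metabelian. A modular-law computation based on $K \cap E = 1$ yields $T_i \cap K = L_i$ (since $L_i \leq K$ and $L_iE \cap K = L_i(E\cap K) = L_i$), hence $K = (K\cap T_1)(K\cap T_2)$, and Lemma~\ref{lemma-3fact-fact-centraliser} applies with $X = K$ to give
\[ Z(K) = (Z(K)\cap L_1)(Z(K)\cap L_2). \]

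The crucial point is that $Z(K)\cap L_1$ is normal in $G$: $K$ centralises it (being contained in $Z(K)$), while $E$ normalises both $Z(K)$ (characteristic in the $G$-normal subgroup $K$) and $L_1$ (by hypothesis), and hence normalises their intersection. Consequently $G = KE$ normalises $Z(K)\cap L_1$. If $Z(K)\cap L_1 \neq 1$, this is the desired subgroup contained in $L_1$. Otherwise the factorisation forces $Z(K) = Z(K)\cap L_2 \leq L_2$, so if $Z(K) \neq 1$ then $Z(K)$ itself, being characteristic in $K$, is a non-trivial $G$-normal subgroup contained in $L_2$.

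The main obstacle is the remaining case $Z(K) = 1$, which can genuinely occur (for example, $K$ could be a centreless metabelian group such as $S_3 \times S_3$ with a suitable abelian factorisation). The natural continuation is to replace $K$ by its Fitting subgroup $F = F(K)$, which is non-trivial since $K$ is finite, non-trivial and solvable; characteristic in $K$, hence normal in $G$; and nilpotent, so $Z(F) \neq 1$ and $C_K(F) = Z(F)$. One would then attempt to apply Lemma~\ref{lemma-3fact-fact-centraliser} with $X = F$ to obtain $Z(F) = (Z(F)\cap L_1)(Z(F)\cap L_2)$ and mimic the argument of the previous paragraph. Two delicate steps arise: (i) establishing the factorisation $F = (F \cap L_1)(F \cap L_2)$ needed to invoke the lemma, conceivably via decomposing $F$ into its characteristic Sylow $p$-subgroups and treating each separately; and (ii) recovering $K$-invariance of $Z(F) \cap L_1$ in the absence of $K$ centralising all of $Z(F)$, which should exploit the metabelianness of $G$ together with the full trifactorised setup $E = E_1 E_2$. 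Step (ii) is expected to be the most technically demanding.
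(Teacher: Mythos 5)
Your opening steps are sound: $G=T_1T_2$ is a product of two abelian groups, Itô gives $G'$ abelian, the Dedekind computation $K\cap T_i=L_i$ is correct, Lemma~\ref{lemma-3fact-fact-centraliser} does apply with $X=K$ to give $Z(K)=(Z(K)\cap L_1)(Z(K)\cap L_2)$, and your observation that $Z(K)\cap L_1$ is normal in $G$ (centralised by $K$, normalised by $E$) while $Z(K)$ itself is normal and lands in $L_2$ when $Z(K)\cap L_1=1$ is a clean disposal of the case $Z(K)\neq 1$. But the case $Z(K)=1$, which you explicitly leave open, is a genuine gap and not a removable one: as you note yourself, a product of two abelian subgroups can be centreless, and your sketched continuation via $F(K)$ stalls exactly where you predict. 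Neither of the two ``delicate steps'' goes through as stated: the factorisation $F(K)=(F(K)\cap L_1)(F(K)\cap L_2)$ needed to feed $X=F(K)$ into Lemma~\ref{lemma-3fact-fact-centraliser} is not available from the hypotheses, and even granting it, $Z(F(K))\cap L_1$ is only centralised by $F(K)$, not by $K$, so its normality in $G$ genuinely fails without further input. So the proposal proves the theorem only under the additional hypothesis $Z(K)\neq 1$.

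The paper avoids this obstruction by choosing a different $X$. After first disposing of the degenerate case $K'[E,K]=1$ (where every subgroup of $K$ is normal in $G$), it sets $X=G'T_1\cap G'T_2=(T_1\cap G'T_2)(T_2\cap G'T_1)$; this is a triply factorised group with abelian factors $G'$, $T_1\cap G'T_2$, $T_2\cap G'T_1$, hence nilpotent by \cite[Corollary~1.3.5]{AmbergFranciosiDeGiovanni92}. Since $1\neq K'[E,K]\leq K\cap X$ and $K\cap X$ is normal in the nilpotent group $X$, one gets $K\cap Z(X)\neq 1$, so $C_K(X)\neq 1$ --- this is precisely the non-vanishing that $Z(K)=C_K(K)$ cannot guarantee. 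Lemma~\ref{lemma-3fact-fact-centraliser} then yields, say, $N=C_K(X)\cap L_1\neq 1$ with $N\leq L_1\cap C_G(G')$. The normality problem you identify in step~(ii) is real and the paper does not solve it directly either: instead it assumes no non-trivial normal subgroup of $G$ lies in $L_1$ or $L_2$ and derives a contradiction through the normal subgroup $C=C_G(T_1\cap C_G(G'))\geq T_1G'$, showing $Z(C)\cap K\leq L_1$ and hence $Z(C)\cap K=1$, against $1\neq L_1\cap C_G(G')\leq Z(C)\cap K$. If you want to complete your argument, replacing $K$ by this nilpotent $X$ and switching to the contradiction scheme is the missing idea.
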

\begin{proof}
Arguing as in Proposition~\ref{teo-3fact-ito-general}, we have that $G=T_1T_2$ and $G'$ is abelian by Itô's theorem (\cite[Kapitel~VI, Satz~4.4]{Huppert67}). Since $G$ is non-trivial, $K$ is non-trivial, therefore, $L_1$ or $L_2$ is non-trivial. If $K'[E,K]=1$, every subgroup of $K$ is normal in $KE=G$. Hence $L_1$ and $L_2$ are normal subgroups of $G$, and the result is proved. Hence we may suppose that $K'[E,K] \neq 1$.

Consider $X=G'T_1\cap G'T_2=G'(T_1\cap G'T_2)=G'(T_2\cap G'T_1)=(T_1\cap G'T_2)(T_2\cap G'T_1)$. Since  $G'$, $T_1\cap G'T_2$ and $T_2\cap G'T_1$ are abelian, we can apply \cite[Corollary 1.3.5]{AmbergFranciosiDeGiovanni92}, to conclude that $X$ is nilpotent. Since $1 \neq K'[E,K] \leq K \cap X$, it follows that $K \cap Z(X) \neq 1$. By Lemma \ref{lemma-3fact-fact-centraliser}, $C_K(X)=(C_K(X)\cap L_1)(C_K(X)\cap L_2)$. Therefore, either $C_K(X)\cap L_1 \neq 1$ or $C_K(X)\cap L_2 \neq 1$. Suppose that $N=C_K(X)\cap L_1 \neq 1$. Note that $N\leq L_1\cap C_G(G')$.

Let us suppose by contradiction, that there are no non-trivial normal subgroups of $G$ contained in $L_1$ and $L_2$. Then by Lemma \ref{lemma-3fact-fact-centraliser}, $K\cap Z(G)=C_K(G)=(Z(G)\cap L_1)(Z(G)\cap L_2)=1$ because $Z(G)\cap L_i$ is a normal subgroup of $G$ contained in $L_i$, $i=1,2$. Write $C=C_G(T_1\cap C_G(G'))$. Note that $C$ contains $T_1G'$, therefore, $C$ is a normal subgroup of $G$. Since $T_1$, $T_2$ are abelian, $T_1\leq C$ and $G=T_1T_2$, it follows that $T_1Z(C) \leq C_G(T_1)$  and $T_2\cap T_1Z(C)\leq Z(G)$. Since $L_2\cap L_1Z(C)\leq K\cap (T_2\cap T_1Z(C))\leq K\cap Z(G)=1$, it follows
$$L_1(Z(C)\cap K)=L_1Z(C)\cap K=L_1Z(C)\cap L_1L_2=L_1(L_1Z(C)\cap L_2)=L_1.$$
Therefore, $Z(C)\cap K\leq L_1$. Since $Z(C)\cap K$ is a normal subgroup of $G$ contained in $L_1$, it follows that $Z(C)\cap K=1$. Therefore $1 \neq L_1\cap C_G(G')\leq Z(C)\cap K=1$, a contradiction.
\end{proof}

\begin{proof}[{Proof of Theorem~\ref{teo3}}]
Let $(G,K,D,C)$ be the large trifactorised group associated to $B$ and let $L_i$ be the additive group of $A_i$, $i = 1,2$. We will apply Theorem \ref{teo-3fact-sli-cont} to this factorised group , where $D = H$ and $C = E$. According to Proposition~\ref{prop-3factAss-SubBr}, $E_i=L_iD\cap C$ is a subgroup of $G$, $i = 1,2$.  By Corollary~\ref{prop-add-mul-br-eq}, $K = L_1L_2$ and $C = E_1E_2$. Since $A_i$ is abelian, $i = 1, 2$, it follows that $L_iH\cap L_iE$ is an abelian subgroup of $G$ for $i=1,2$ by Proposition~\ref{prop-3factAss-SubBr} and Corollary~\ref{prop-ab-br-eq}. Finally, since $A_1$ is left ideal of $B$, it follows that $[C, L_1] \leq L_1$ by Proposition~\ref{prop-trad-prop-br}. Applying Theorem \ref{teo-3fact-sli-cont}, there exists a non-trivial normal subgroup of $G$ contained in $L_1$ or $L_2$. By Proposition \ref{prop-trad-prop-br}, $N$ correspond to a non-zero strong left ideal of $B$ contained in $A_1$ or $A_2$.
\end{proof}

\section*{Acknowledgements}

The first and second authors are supported by the grant CIAICO/2023/007 from the Conselleria d'Educació, Universitats i Ocupació, Generalitat Valenciana.

\bibliographystyle{plain}
\bibliography{bibgroup}
\end{document}